\newcommand{\lan}{{\mathcal L}}
\newcommand\var{\text{var}}
\newcommand\vl{\underline{v}}
\newcommand\wl{\underline{w}}
\newcommand\N {{\mathbb N}}
\newtheorem{theorem}{Theorem}[section]
\newtheorem{corollary}{Corollary}
\newtheorem{lemma}[theorem]{Lemma}
\newtheorem{proposition}{Proposition}
\theoremstyle{definition}
\newtheorem{remark}{Remark}
\newtheorem*{notation}{Notation}
\title[shrinking target for random IFS] 
{shrinking target problem for random iterated function systems}
\author[Zhihui YUAN]{Zhihui Yuan}
\address{Zhihui Yuan, School of Mathematics and Statistics, Huazhong University of Science and Technology, 430074 Wuhan, P.R.China}
\subjclass[2010]{Primary:37C45; Secondary:37Hxx,28A80,11K55.}
\keywords{Shrinking target problem, random iterated function systems, ubiquity theorem.}
\email{yzhh@hust.edu.cn}
\begin{document}
	\maketitle
	
	
	%
	%
	%

\begin{abstract}
	We describe the shrinking target problem for random iterated function systems which semi-conjugate to a random subshifts of finite type. We get the Hausdorff dimension of the set based on shrinking target problems with given targets. The main idea is an extension of ubiquity theorem which plays an important role to get the lower bound of the dimension. Our method can be used to deal with the sets with respect to an more general targets and the sets based on the quantitative Poincar\'{e} recurrence properties. 
\end{abstract}
	
	\section{Introduction}
	
	This paper investigate the shrinking target problem for random iterated function systems. On a base probability space $(\Omega,\mathcal F,\mathbb{P})$ with a measure preserving ergodic transformation $\sigma$ on it,  we can define a family of random attractors generated by random iterated function systems. We will consider the Hausdorff dimension of the points whose orbit can be well approximated.
	
	Shrinking target problem is considering such a set:
	$$\{x\in X: T^nx\in A_n, \text{ for infinitely many } n\in\mathbb{N}\},$$
	where $\{A_n\}_{n\in\mathbb{N}}$ is a given (decreasing in some sense) sequence of subsets of the giving compact space $X$ and $T:X\to X$ is a continuous (normally expanding) map.  
	It was proposed in \cite{Hill1995,Hill1997} where they consider an expanding rational map on Riemann sphere acting on its Julia sets. There are two field to study such a set: in the sense of measure and dimension. See the results of measures in \cite{Boshernitzan1993,Chernov2001,Barreira2001,Maucourant2006,Galatolo2007,Fernandez2012,Kim2014} and  the results of dimension in \cite{Stratmann2002,Urbanski2002,Tan2011,Reeve2011,Bugeaud2014,Li2014,Seuret2015,Persson2017} for instances.
	
	Shrinking target problem for nonautonomous dynamical systems corresponding to Cantor series expansions has been considered in \cite{Fishman2015} (in the sense of dimension) and \cite{Sun2017}(in the sense of measure). There is a strong relationship between nonautonomous dynamical systems and random dynamical systems.
	In this paper, we study the dimension result of such problem for random iterated function systems, which can be seen as an extension of \cite{Fishman2015}. We derive the formula for Hausdorff dimension of the considering set after some reasonable assumptions. 

	The outline of the paper is as follows: Section~\ref{section:Setting and main results} develops background about random iterated function systems and presents our main results, namely  theorem~\ref{main} and corollary~\ref{W'} and~\ref{W''}. Section~\ref{section:Basic properties}  provides the basic properties that will be used in the proof of our results. Section~\ref{section:Upper bound} will give the upper bound while section~\ref{section:Lower bound} will give the lower bound by building a theorem of an extension of ubiquity theorem which can go back to \cite{Dodson1990,BS0,Beresnevich2006,BS2} to deal with limsup set. Section~\ref{section:corrollary} will return to explain the corollaries which can be seen as an extension of theorems~\ref{main}.

	\section{Setting and main result}\label{section:Setting and main results}
	
	\subsection{Random subshift and the pressure}\label{subsection: Random subshift}
	
	Denote by $\Sigma$ the symbolic space $({\mathbb Z^+})^\N$, and endow it with the standard ultrametric distance: for any $\underline u=u_0u_1\cdots$ and $\vl=v_0v_1\cdots$ in $\Sigma$, $d(u,v)=e^{-\inf\{n\in\mathbb N: \ u_n\neq v_n\}}$, with the convention $\inf (\emptyset)=+\infty$. Let $(\Omega,\mathcal{F},\mathbb{P})$ be a complete probability space and $\sigma$  a $\mathbb{P}$-preserving  ergodic map. The product space $\Omega\times \Sigma$ is endowed with the $\sigma$-field $\mathcal F\otimes \mathcal B(\Sigma)$, where $\mathcal B(\Sigma)$ stands for the Borel $\sigma$-field of $\Sigma$.
	
	Let $l$ be a $\mathbb{Z}^+$ valued random variable such that
	$$
	\int\log (l)\, \mathrm{d}\mathbb{P}<\infty \text{ and }  \mathbb{P}(\{\omega\in\Omega: l(\omega)\geq 2\})>0.
	$$
	Let $A=\{A(\omega)=(A_{r,s}(\omega)):\omega\in\Omega\}$ be  a random transition matrix such that $A(\omega)$ is a $l(\omega)\times l(\sigma\omega)$-matrix with entries 0 or 1. We  suppose that the map $\omega\mapsto A_{r,s} (\omega)$ is measurable for all $(r,s)\in \mathbb{Z}^+\times \mathbb{Z}^+$ and each $A(\omega)$  has at least one non-zero entry in each row and each column.
	Let $$\Sigma_{\omega}=\{\vl=v_0v_1\cdots;1\leq v_k\leq l(\sigma^{k}(\omega))\text{ and } A_{v_k,v_{k+1}}(\sigma^{k}(\omega))=1\ \text{for all } k\in\mathbb{N} \},$$
	and $F_{\omega}:\Sigma_{\omega}\rightarrow\Sigma_{\sigma\omega}$ be the left shift $(F_{\omega}\vl)_i=v_{i+1}$ for any $\vl=v_0v_1\cdots\in \Sigma_{\omega}$. Define $\Sigma_{\Omega}=\{(\omega,\vl):\omega\in\Omega, \vl\in\Sigma_\omega\}$.
	The space $\Sigma_\Omega$ is endowed with the $\sigma$-field  obtained as the trace of $\mathcal F\otimes \mathcal B(\Sigma)$. 
	Define the map $F:\Sigma_{\Omega}\to \Sigma_{\Omega}$  as $F((\omega,\vl))=(\sigma\omega,F_{\omega}\vl)$. The corresponding family $\tilde{F}=\{F_{\omega}:\omega\in \Omega\}$ is called a random subshift. We assume that this  random subshift  is topologically mixing, i.e.  there exists a $\mathbb{Z}^+$-valued random variable $M$ on $(\Omega,\mathcal{F},\mathbb{P})$ such that for $\mathbb{P}$-almost every (a.e.) $\omega$,
	$
	A(\omega)A(\sigma\omega)\cdots A(\sigma^{M(\omega)-1}\omega)$ is positive (i.e. each entry of the matrix is positive).

	For each $n\in\mathbb{N}$, define $\Sigma_{\omega,n}$ as the set of words $v=v_0v_1\cdots v_{n-1}$ of length $n$, i.e. such that
	$1\leq v_k\leq l(\sigma^{k}(\omega))$ for all  $0\leq k\leq n-1$
	and $A_{v_k,v_{k+1}}(\sigma^{k}(\omega))=1$ for all $ 0\leq k\leq n-2$.
	Define $\Sigma_{\omega,*}=\cup_{n\in\mathbb{N}}\Sigma_{\omega,n}$.
	For $v=v_0v_1\cdots v_{n-1}\in\Sigma_{\omega,n}$,  we write $|v|$ for the length $n$ of $v$, and we define the cylinder $[v]_{\omega}$ as
	$[v]_{\omega}:=\{\wl\in\Sigma_\omega:w_i=v_i\text{ for }i=0,\dots,n-1\}$  and $v^\star=v_0v_1\cdots v_{n-2}\in\Sigma_{\omega,n-1}$.  
	
	For any $s\in \Sigma_{\omega,1}$, $p\geq M(\omega)$ and $s'\in \Sigma_{\sigma^{p+1}\omega,1}$, there is  at least one word $v(s,s')\in \Sigma_{\sigma\omega,p-1}$ such that $sv(s,s')s'\in \Sigma_{\omega,p+1}$. We fix such a $v(s,s')$ and denote the word $sv(s,s')s'$ by $s\ast s'$. Similarly, for any $w=w_0w_1\cdots w_{n-1}\in\Sigma_{\omega,n}$ and $w'=w'_0w'_1\cdots w'_{m-1}\in\Sigma_{\sigma^{n+p}\omega,m}$ with $n,p,m\in\mathbb{N}$ and $p\geq M(\sigma^{n-1}\omega)$, we  fix $v(w_{n-1},w'_{0})\in \Sigma_{\sigma^n\omega, p}$ (a word depending on  $w_{n-1}$ and $w'_{0}$ only) so that  $w\ast w':=w_0w_1\cdots w_{n-1}v(w_{n-1},w'_{0})w'_0w'_1\cdots w'_{m-1}\in\Sigma_{\omega,n+m+p-1}$.
	
	We say that a measurable function $\Upsilon:\Sigma_{\Omega}\to \mathbb{R}$ is in $\mathbb{L}^1(\Omega,C(\Sigma))$ if
	\begin{enumerate}
		\item \begin{equation*}\label{int}
		C_{\Upsilon}=:\int_{\Omega} \|\Upsilon(\omega)\|_{\infty}\, \mathrm{d}\mathbb{P}(\omega)<\infty,
		\end{equation*}
		where
		$
		\|\Upsilon(\omega)\|_{\infty}=:\sup_{\underline{v}\in \Sigma}|\Upsilon(\omega,\vl)|$,
		\item 
		for any $n\in\mathbb{N}$, there exists a random variable $\var_n(\Upsilon,\cdot)$ such that 
		$$\sup\{|\Upsilon(\omega,\vl)-\Upsilon(\omega,\wl)| : v_i = w_i, \forall i < n\}\leq \var_n(\Upsilon,\omega).$$
		Furthermore, for $\mathbb{P}$-a.e. $\omega\in\Omega$, $\var_n(\Upsilon,\omega) \to 0$ as $n\to \infty$.
	\end{enumerate}
	
	Now, if $\Upsilon\in \mathbb{L}^1(\Omega,C(\Sigma))$, due to Kingsman's subadditive ergodic theorem,
	\begin{equation}\label{def Pressure}
	P(\Upsilon)=\lim_{n\to\infty}\frac{1}{n}\log \sum_{v\in\Sigma_{\omega,n}}\sup_{\underline v\in[v]_\omega}\exp\left(S_n\Upsilon(\omega,\underline v)\right )
	\end{equation}
	exists for $\mathbb{P}$-a.e.  $\omega$ and does not depend on $\omega$, where $S_n\Upsilon(\omega,\underline v)=\sum_{i=0}^{n-1}\Upsilon(F^i(\omega,\underline v))$.  This limit is called {\it topological pressure of $\Upsilon$}.
	
	\subsection{A model of random iterated function systems}\label{subsection:A model of random iterated function systems}
	
	We present the model of random iterated function systems.
	Fixed any nonempty compact set $U\subset \mathbb{R}^d$ such that $U=\overline{{\rm Int}(U)}$. We assume that for any $s\in\mathbb{N}$, for any $\omega\in\Omega$, there exists a homeomorphism map $g^s_{\omega}$ from $U_{\sigma\omega}=U$ to $g^s_{\omega}(U)\subset U$. Denote $g_{\omega}^{v}=:g_{\omega}^{v_0}\circ g_{\sigma\omega}^{v_1}\circ\cdots\circ g_{\sigma^{n-1}\omega}^{v_{n-1}}$ for $v=v_0v_1\cdots v_{n-1}\in\Sigma_{\omega,n}$.
	We can define the following sets:
	\begin{eqnarray*}
		U_{\omega}^v&=&g_{\omega}^{v}(U),\ \forall v\in\Sigma_{\omega,n},\\
		X_{\omega}&=&\bigcap_{n\geq 1}\bigcup_{v\in\Sigma_{\omega,n}}U_{\omega}^v\quad {\rm and }\quad X_{\omega}^v=X_{\omega}\cap U_{\omega}^v,\\
		X_{\Omega}&=&\{(\omega,x):\omega\in\Omega, x\in X_{\omega}\},
	\end{eqnarray*}
	Denote $T_{\omega}^s=(g^s_{\omega})^{-1}: U_{\omega}^s\to U_{\sigma\omega}=U$ as its inverse function and $T_{\omega}^{v}=:T_{\sigma^{n-1}\omega}^{v_{n-1}}\circ T_{\sigma^{n-2}\omega}^{v_{n-2}}\circ\cdots\circ T_{\omega}^{v_{0}}$ for $v=v_0v_1\cdots v_{n-1}\in\Sigma_{\omega,n}$. we need to point out that $g_{\omega}^{v}$ is a map from $U$ to $U_{\omega}^{v}$ and $T_{\omega}^{v}: U_{\omega}^{v} \to U$ is the inverse map of $g_{\omega}^{v}$.

	It is easy to check that $g^{v_0v_1\cdots v_{n-1}}_{\omega}(X_{\sigma^{n}\omega}^{v_{n}})=X_{\omega}^v$ for any $v=v_0v_1\cdots v_{n-1}v_{n}\in\Sigma_{\omega,n+1}$ with $n\in\mathbb{N}$.
	
	We say that a function $\widetilde \psi: \widetilde{U}_{\Omega}=\{(\omega,s,x):\omega\in \Omega,1\leq s\leq l(\omega),x\in U\} \to \mathbb{R}$ is in $\mathbb{L}^1(\Omega,\widetilde{C}(U))$ if
	\begin{enumerate}
		\item for any $1\leq s\leq l(\omega), x\in U$, the map $\omega\mapsto \widetilde \psi(\omega,s,x)$ is measurable, 
		\item 
		$\int_{\Omega}\|\widetilde\psi(\omega)\|_{\infty}\mathrm{d}\mathbb{P}(\omega)<\infty$,
		where
		$\|\widetilde\psi(\omega)\|_{\infty}:=\sup_{1\leq s\leq l(\omega)}\sup_{x\in U_{\omega}^{s}}|\widetilde\psi(\omega,s,x)|$,
		
		\item for any $\varepsilon>0$, there exists a random variable $\var(\widetilde\psi,\cdot,\varepsilon)$ such that 
		$$\sup_{1\leq s\leq l(\omega)}\sup_{x,y\in U,|x-y|\leq \varepsilon }|\widetilde\psi(\omega,s,x)-\widetilde\psi(\omega,s,y)|\leq \var(\widetilde\psi,\omega,\varepsilon).$$
		Furthermore, for $\mathbb{P}$-a.e. $\omega\in\Omega$, $\var(\widetilde\psi,\omega,\varepsilon) \to 0$ as $\varepsilon\to 0$.
	\end{enumerate}
	The following assumptions will be needed throughout the paper.
	\begin{enumerate}
		\item ${\rm Int}(U_{\omega}^{s_1})\cap {\rm Int}(U_{\omega}^{s_2})=\emptyset$ for all $\omega\in\Omega$ and for all $1\leq s_1,s_2\leq l(\omega)$ with $s_1\neq s_2$.
		\item there exists $\psi\in\mathbb{L}^1(\Omega,\widetilde{C}(U))$ such that 
		\begin{equation}\label{cpsi}
		c_{\psi}:=-\int_{\Omega}\sup_{1\leq s\leq l(\omega)}\sup_{x\in U}\psi(\omega,s,x)\mathrm{d}\mathbb{P}(\omega)>0
		\end{equation}
		and
		\begin{equation}\label{distortion}
		\exp(-\var(\psi,\omega,\|x-y\|))\leq\frac{\|g_{\omega}^s(x)-g_{\omega}^s(y)\|}{\|x-y\|\exp(\psi(\omega,s,x))}\leq \exp(\var(\psi,\omega,\|x-y\|)).
		\end{equation}
	\end{enumerate}

	Under the assumptions, fixed $x_0\in U$, there is $\mathbb{P}$-almost surely a natural projection $\pi_\omega: \Sigma_{\omega}\rightarrow X_{\omega}$ defined as
	$$
	\pi_\omega(\underline{v})= \lim_{n\rightarrow\infty}g_{\omega}^{v_0}\circ g_{\sigma\omega}^{v_1}\circ\cdots\circ g_{\sigma^{n-1}\omega}^{v_{n-1}}(x_0).$$
	Noticing that this mapping may not be injective, but it should be surjective.
	\subsection{Shrinking target problem and result}
	Let $\phi\in\mathbb{L}^1(\Omega,\widetilde{C}(U))$ such that 
	\begin{equation}\label{cphi}
	c_{\phi}:=-\int_{\Omega}\sup_{1\leq s\leq l(\omega)}\sup_{x\in U}\phi(\omega,s,x)\mathrm{d}\mathbb{P}(\omega)> 0.
	\end{equation}
	For any $\omega\in\Omega$, fix any point $z_\omega\in X_\omega$. Now we consider the points in $X_{\omega}$ whose orbits are well approximated by the sequence $\{z_{\sigma^n\omega}\}_{n\in\mathbb{N}}$ with rate corresponding to $\phi$, that is 
	$$W(\phi,\omega)=\left\{x\in X_{\omega}:\begin{array}{l}
	x\in U_{\omega}^v \text{ and }\|T_{\omega}^{v}x-z_{\sigma^{|v|}\omega}\|\leq \exp(S_{|v|}\phi(\omega,x))\\
	\text{ for infinitely many } v\in\Sigma_{\omega,*}
	\end{array}
	\right\}.$$
	where for any $n\in\mathbb{N}$, for any $v=v_0v_1\cdots v_{n-1}\in \Sigma_{\omega,n}$, for $x\in U_{\omega}^v$, $S_n\phi(\omega,x):=\phi(\omega,x)+\sum_{i=1}^{n-1}\phi(\sigma^i\omega,T_{\omega}^{v_0\cdots v_{i-1}}x)$.
	The main aim of the shrinking target problem is to consider the dimension and the measure of the set $W(\phi,\omega)$. In this paper we will fix our effort on the Hausdorff dimension of it.

	Define
	$$
	\Psi(\omega,\vl)=\psi(\omega,v_0,\pi(\vl))\text{ and }\Phi(\omega,\vl)=\phi(\omega,v_0,\pi(\vl)),
	$$
	for any $\omega\in\Omega$ and any $\vl=v_0v_1\cdots\in\Sigma_{\omega}$.
	By construction, $\Psi$ and $\Phi$ are in $\mathbb{L}^1(\Omega,C(\Sigma))$.
	\begin{theorem}\label{main}
		{\rm	Under our assumption, for $\mathbb{P}$-a.e. $\omega\in\Omega$, the Hausdorff dimension of the set $W(\phi,\omega)$ is the unique root $q_0$ of the equation $P(q(\Psi+\Phi))=0$, where the pressure function $P$ is defined in \eqref{def Pressure}. }
	\end{theorem}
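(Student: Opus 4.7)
The plan is to prove matching upper and lower bounds $\dim_H W(\phi,\omega)\le q_0$ and $\dim_H W(\phi,\omega)\ge q_0$ on a full $\mathbb{P}$-measure set. Uniqueness of $q_0$ follows from $q\mapsto P(q(\Psi+\Phi))$ being continuous, convex, and strictly decreasing: its slope at any $q$ equals $\int(\Psi+\Phi)\,\mathrm d\mu_q$ for the associated equilibrium state, and (\ref{cpsi})--(\ref{cphi}) together with the definition of $\Psi,\Phi$ force this quantity to be at most $-(c_\psi+c_\phi)<0$. Since $P(0)>0$ (using $\mathbb P(l\ge 2)>0$ and topological mixing) and $P(q(\Psi+\Phi))\to-\infty$ as $q\to\infty$, there is a unique root $q_0\in(0,\infty)$.

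For the upper bound, I would use the natural cover
\[
W(\phi,\omega)\subset\bigcup_{n\ge N}\bigcup_{v\in\Sigma_{\omega,n}} g_\omega^v\bigl(B(z_{\sigma^n\omega},e^{S_n\phi(\omega,\cdot)})\bigr).
\]
Iterating (\ref{distortion}) along $g_\omega^v=g_\omega^{v_0}\circ\cdots\circ g_{\sigma^{n-1}\omega}^{v_{n-1}}$, each ball in the cover has diameter at most $\exp\bigl(S_n(\Psi+\Phi)(\omega,\vl)+o(n)\bigr)$ for some $\vl\in[v]_\omega$, the sublinear error coming from Birkhoff sums of $\var(\psi,\sigma^i\omega,\cdot)$ and $\var(\phi,\sigma^i\omega,\cdot)$, both tending to $0$ a.s.\ as the cylinder diameters contract exponentially. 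For any $q>q_0$ strict monotonicity yields $P(q(\Psi+\Phi))<0$, so by (\ref{def Pressure}) the $q$-sum of the cover is bounded by $e^{o(n)}\sum_{n\ge N}e^{nP(q(\Psi+\Phi))}$, which vanishes as $N\to\infty$. Hence $\mathcal H^q(W(\phi,\omega))=0$ and $\dim_H W(\phi,\omega)\le q_0$.

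For the lower bound I would implement the extended ubiquity theorem announced in the introduction through a Cantor construction. The target centres $y_v=g_\omega^v(z_{\sigma^{|v|}\omega})$ form a ubiquitous family: at each level $n$ the balls $B(y_v,K e^{S_n\Psi})$ already cover $X_\omega$, since $y_v\in U_\omega^v$ and (\ref{distortion}) gives $\mathrm{diam}(U_\omega^v)\le K e^{S_n\Psi}$, while $W(\phi,\omega)$ is essentially the $\limsup$ of the shrunken balls $B(y_v,e^{S_n(\Psi+\Phi)})$. Fix $\varepsilon>0$, choose a rapidly growing sequence $n_1\ll n_2\ll\cdots$, and inductively select nested families $\mathcal G_k\subset\Sigma_{\omega,n_k}$ so that every $v'\in\mathcal G_{k+1}$ extends some parent $v\in\mathcal G_k$, with $U_\omega^{v'}$ lying inside the parent's target ball $g_\omega^v(B(z_{\sigma^{n_k}\omega},e^{S_{n_k}\phi}))$ and the selected $U_\omega^{v'}$ pairwise disjoint. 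A counting argument based on the pressure identity $P(q_0(\Psi+\Phi))=0$ applied on the shifted space $\sigma^{n_k}\omega$ shows that one can afford roughly $\exp\bigl((q_0-\varepsilon)|S_{n_{k+1}}(\Psi+\Phi)-S_{n_k}(\Psi+\Phi)|\bigr)$ children per parent; the uniform mass measure on the resulting Cantor limit is then a Frostman measure of exponent $q_0-O(\varepsilon)$, and the mass distribution principle gives $\dim_H W(\phi,\omega)\ge q_0-O(\varepsilon)$.

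The principal obstacle is upgrading the asymptotic pressure (\ref{def Pressure}) to effective, single-orbit estimates along $\{\sigma^{n_k}\omega\}_{k\ge 1}$ that are uniform enough to sustain the selection at every stage. The random variables $l(\sigma^n\omega)$, $M(\sigma^n\omega)$, and the distortion moduli $\var(\psi,\sigma^n\omega,\cdot)$, $\var(\phi,\sigma^n\omega,\cdot)$ fluctuate along the orbit, so one restricts to a Lusin subset on which they are all bounded, exploits topological mixing (via positivity of $A(\omega)\cdots A(\sigma^{M(\omega)-1}\omega)$) to concatenate admissible words across the rapid jumps $n_k\to n_{k+1}$ without losing children, and relies on the $\mathbb P$-a.s.\ uniform convergence rates coming from Birkhoff's and Kingman's theorems on that subset to keep the actual Frostman exponent within $O(\varepsilon)$ of $q_0$.
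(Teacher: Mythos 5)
Your upper bound is correct and is essentially the paper's argument: cover $W(\phi,\omega)$ by the natural cylinders shrunk to the target scale, estimate each diameter by $\exp(S_n(\Psi+\Phi)+o(n))$ via iterated bounded distortion, and use $P(q(\Psi+\Phi))<0$ for $q>q_0$ together with lemma~\ref{converge Pressure} to kill the $q$-sum. The remark about uniqueness of $q_0$ should not invoke a derivative formula $\int(\Psi+\Phi)\,\mathrm d\mu_q$: in the $C^1$ (non-H\"older) setting the pressure $q\mapsto P(q(\Psi+\Phi))$ is only known to be convex and continuous, not differentiable; strict monotonicity from \eqref{cpsi}--\eqref{cphi} is all you need and all you can claim.

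Your lower bound, however, has a genuine gap. Distributing \emph{uniform} mass on the Cantor limit, with a parent-to-child branching count estimated from the pressure, will not produce a Frostman exponent $q_0-O(\varepsilon)$ here. The reason is that the construction is not conformal and not i.i.d.: cylinders $U_\omega^v$ of the same symbolic depth $n$ have geometric diameters $\asymp\exp(S_n\Psi(\omega,\vl))$ which vary hugely with $\vl$, so counting children by level $n$ and weighting them equally produces a measure whose local exponent oscillates and generically falls below $q_0$. What is actually needed --- and what the paper does --- is to distribute mass according to a random weak Gibbs measure for the potential $q_i(\Psi_i+\Phi_i)$, conditioned on a good set $E_{i,\alpha_i}$ of typical points, so that the measure of a cylinder scales like $|U_\omega^{w\ast v}|^{q_i(1+\alpha_i)}$ up to controlled errors (equation~\eqref{control mass u-wv}). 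This in turn forces two further ingredients you did not anticipate: \emph{(a)} replacing $\Psi,\Phi$ by H\"older approximants $\Psi_i,\Phi_i$, because the pressure function $T(q)$ defined by $P(q\Phi-T(q)\Psi)=0$ is merely concave for $C^1$ potentials and its derivative $\alpha_i=T_i'(q_i)$ --- which governs the ratio between the cylinder scale and the shrunken target scale (lemma~\ref{lemma:Control Uwv'}) --- only exists after approximation; and \emph{(b)} a Besicovitch covering argument (lemma~\ref{lemma: Control zeta Bxr}) to control $\zeta_{\omega,w}(B(x,r))$ in $\mathbb R^d$, since in this non-conformal higher-dimensional setting a ball of radius $r$ can meet many cylinders of comparable size, and one must bound their number by $(Cr^{-\varepsilon^3})^d$ rather than by a constant. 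Without the Gibbs weighting, the H\"older approximation, and the Besicovitch control, the selection and mass-distribution steps you describe cannot be made to close.
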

	
	In theorem \ref{main}, for a fixed $\omega\in\Omega$, the targets are $\{z_{\sigma^n\omega}\}_{n\in\mathbb{N}}$ which are just depends on $n$. Using a similar method of the proof of the theorem, we can deal with general targets $\{z_{\sigma^{|v|}\omega}^v\}_{v\in\Sigma_{\sigma^{|v|}\omega}}$ which are depends on $v$. It is described in the following corollary which will be explained in section~\ref{section:corrollary}. 
	\begin{corollary}\label{W'}
		{\rm For any $\omega\in\Omega$, for any $n\in\mathbb{Z}^{+}$, for any $v=v_0v_1\cdots v_{n-1}\in\Sigma_{\omega,n}$, fix $z_{\sigma^n\omega}^v\in X_{\sigma^n\omega}$.
			We make the following assumption: there exists a $\mathbb{Z}^+$-valued r.v. $M'$ on $(\Omega,\mathcal{F},\mathbb{P})$ such that
			\begin{equation}\label{Ass W'-1}
			\mathbb{P}\left(\left\{\omega\in\Omega: \begin{array}{l}
			\forall n\in\mathbb{Z}^{+},\forall v\in\Sigma_{\omega,n},\text{ there exist } k\in\mathbb{N},\\
			k\leq M'(\sigma^{n-1}\omega)\text{ and } v'\in\Sigma_{\sigma^n\omega,k}\text{ such that: }\\
			vv'\in\Sigma_{\omega,n+k} \text{ and } g_{\omega}^{vv'}(z_{\sigma^{n+k}\omega}^{vv'})\in X_{\omega}^{vv'}
			\end{array}\right\}\right)=1
			\end{equation}
			Now we consider the following set
			$$W'(\phi,\omega)=\left\{x\in X_{\omega}:\begin{array}{l}
			x\in U_{\omega}^v \text{ and }\|T_{\omega}^{v}x-z_{\sigma^{|v|}\omega}^v\|\leq \exp(S_{|v|}\phi(\omega,x))\\
			\text{ for infinitely many } v\in\Sigma_{\omega,*}
			\end{array}
			\right\}.$$
			The Hausdorff dimension of $W'(\phi,\omega)$ is equal to the unique root $q_0$ of the equation $P(q(\Psi+\Phi))=0$.
		}
	\end{corollary}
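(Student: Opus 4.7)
The plan is to adapt both halves of the proof of Theorem~\ref{main} (the upper bound from Section~\ref{section:Upper bound} and the lower bound built on the extended ubiquity theorem of Section~\ref{section:Lower bound}), with the fixed targets $\{z_{\sigma^n\omega}\}_n$ replaced by the word-dependent family $\{z^v_{\sigma^{|v|}\omega}\}_v$. Since the $v$-dependence of the targets does not enter into any diameter or pressure estimate, most of the argument should carry over verbatim. The role of assumption~\eqref{Ass W'-1} is exactly to manufacture, inside every cylinder of $X_\omega$, a resonant point at uniformly bounded additional depth, which is what the ubiquity step needs.

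For the upper bound, I would first observe that for every $v\in\Sigma_{\omega,n}$, the slice $\{x\in U_\omega^v:\|T_\omega^v x-z^v_{\sigma^n\omega}\|\le\exp(S_n\phi(\omega,x))\}$ is, by iterated application of the distortion bound~\eqref{distortion}, contained in a ball of radius asymptotic to $\exp(S_n(\Psi+\Phi)(\omega,\underline u))$ for any $\underline u\in[v]_\omega$, up to a $\var(\psi,\cdot)$-correction that is $\mathbb{P}$-a.s.\ negligible as $n\to\infty$. Summing the $q$-volumes of these balls over $v\in\Sigma_{\omega,n}$ and then over $n\ge N$, the definition~\eqref{def Pressure} of the pressure yields a geometric bound $\sum_{n\ge N}\exp(nP(q(\Psi+\Phi))+o(n))$, finite whenever $q>q_0$. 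Hence $\dim_H W'(\phi,\omega)\le q_0$, exactly as in Section~\ref{section:Upper bound}; note that the center of the covering ball may be taken as any anchor in $U_\omega^v$, so the precise position of $z^v_{\sigma^n\omega}$ is irrelevant.

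For the lower bound, I would use~\eqref{Ass W'-1} to reduce to the setting of Theorem~\ref{main} by passing to extended words. Given $v\in\Sigma_{\omega,n}$, pick $v'=v'(v)\in\Sigma_{\sigma^n\omega,k}$ with $k\le M'(\sigma^{n-1}\omega)$ such that $y_v:=g_\omega^{vv'}(z^{vv'}_{\sigma^{n+k}\omega})\in X_\omega^{vv'}\subset X_\omega^v$. The family $\{y_v\}_v$ then plays the role of the resonant points inside $X_\omega$, and~\eqref{distortion} guarantees that any $x\in X_\omega$ within distance $\exp(S_{n+k}(\Psi+\Phi)(\omega,\underline w))$ of $y_v$, for some $\underline w\in[vv']_\omega$, automatically satisfies the shrinking-target inequality at the word $vv'$. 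Feeding this family into the ubiquity construction of Section~\ref{section:Lower bound} (with the length-$(n+k)$ radii in place of the length-$n$ ones) produces a Cantor subset of $W'(\phi,\omega)$ whose Hausdorff dimension can be pushed arbitrarily close to $q_0$.

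The main obstacle will be to verify that the random extension $v\mapsto vv'$ does not spoil the pressure balance. Since $k\le M'(\sigma^{n-1}\omega)$ is bounded only in a $\sigma$-stationary sense, the Birkhoff corrections $S_{n+k}(\Psi+\Phi)-S_n(\Psi+\Phi)$ and the cardinality correction $|\Sigma_{\omega,n+k}|/|\Sigma_{\omega,n}|$ have to be absorbed using Kingman's theorem applied to $\Psi+\Phi$, together with an $\mathbb{L}^1$-type hypothesis on $\log M'$ exactly parallel to the one imposed on $M$ in Section~\ref{subsection: Random subshift}. Once that is in hand, topological mixing of $\tilde F$ is invoked as in Section~\ref{section:Lower bound} to splice the Moran-type sub-cylinders at successive generations of the Cantor set, now through gaps of random length $k+M$ rather than $M$, and the remainder of the ubiquity argument goes through unchanged.
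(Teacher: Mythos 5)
Your upper bound argument matches the paper exactly: the sets $V_\omega^v$ have diameter controlled by $\exp(S_{|v|}(\Psi+\Phi)(\omega,\underline v))$ uniformly in the choice of target, so the covering sum from Section~\ref{section:Upper bound} carries over verbatim. Your lower bound outline is also the right one --- replace the fixed hit point $z_{\sigma^{|v|}\omega}$ by the point $y_v=g_\omega^{vv'}(z^{vv'}_{\sigma^{|v|+k}\omega})\in X_\omega^{vv'}\subset X_\omega^{v}$ manufactured by~\eqref{Ass W'-1}, and feed this family into the Moran/ubiquity construction of Theorem~\ref{theorem lower bound}.

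The gap is in your treatment of the random extension length $k$. You write that ``an $\mathbb{L}^1$-type hypothesis on $\log M'$ exactly parallel to the one imposed on $M$'' is needed, to be absorbed via Kingman's theorem. But no such integrability hypothesis is placed on $M$ in Section~\ref{subsection: Random subshift} ($M$ is merely a $\mathbb{Z}^+$-valued random variable; the $\mathbb{L}^1$ hypothesis is on $\log l$, not $\log M$), and no such hypothesis appears in the statement of Corollary~\ref{W'} either. If your proof genuinely requires $\log M'\in\mathbb{L}^1$, it proves a strictly weaker result. The paper sidesteps the issue much more cheaply: since $M'$ is $\mathbb{Z}^+$-valued, one can enlarge $\widetilde M$ in~\eqref{M big} so that $\mathbb{P}(\{M\le\widetilde M,\ M'\le\widetilde M\})>7/8$, and all subsequent good sets $\widetilde\Omega\supset\Omega(i)\supset\Omega_i$ automatically bound $M'$ by $\widetilde M\le\mathcal N$. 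In the construction of Step~2 the splice point is forced into $\Omega_{i+1}\subset\widetilde\Omega$, so at every generation $k\le\widetilde M\le\mathcal N$ is a \emph{deterministic} bound; the word $w\ast v(l)\widetilde v$ then plays exactly the role of $w\ast v(l)\ast s$ in the original proof, and none of the pressure, Birkhoff, or cardinality estimates are perturbed beyond what the $\ast$-joining already introduced. Kingman's theorem and any integrability of $M'$ are not needed; you were solving a problem the construction already avoids.
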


    Here, the equation \eqref{Ass W'-1} means that the target can be hit. In the fullshifts situation for each point $z_{\sigma^n\omega} \in X_{\sigma^n\omega}$, it can be hit for some $x\in X_{\omega}^{v}$ for each $v\in\Sigma_{\omega,n}$ which means $T_{\omega}^v x=z_{\sigma^n\omega}$ since $T_{\omega}^{v}=X_{\sigma^n\omega}$. We make a more general condition that we do not need to wait too long to get a point that can be hit (that is $g_{\omega}^{vv'}(z_{\sigma^{n+k}\omega}^{vv'})\in X_{\omega}^{vv'}$). In fact, here the random variable $M'$ play the same role as $M$. Instead of the equation~\eqref{Ass W'-1} in the assumption, we can also use the following condition: For $\mathbb{P}$-a.e. $\omega\in\Omega$, there exist a sequence $\{\gamma_n\}_{n\in\mathbb{N}}$ of positive numbers decreasing to 0 such that
    for all $n\in\mathbb{Z}^{+}$, for all $v\in\Sigma_{\omega,n}$, there exist $k\in\mathbb{N},k\leq \gamma_n\text{ and } v'\in\Sigma_{\sigma^n\omega,k}$
    satisfying 
    	$vv'\in\Sigma_{\omega,n+k}$ and $g_{\omega}^{vv'}(z_{\sigma^{n+k}\omega}^{vv'})\in X_{\omega}^{vv'}$. 
	\begin{corollary}\label{W''}
		{\rm 
			We make the following assumption: there exists a $\mathbb{Z}^+$-valued r.v. $M''$ on $(\Omega,\mathcal{F},\mathbb{P})$ such that 
			\begin{equation}\label{Ass W''-1}
			\mathbb{P}\left(\left\{\omega\in\Omega: \begin{array}{l}
			\forall n\in\mathbb{Z}^{+},\forall v\in\Sigma_{\omega,n},\text{ there exist } k\in\mathbb{N},\\
			k\leq M''(\sigma^{n-1}\omega)\text{ and } v'\in\Sigma_{\sigma^n\omega,k}\text{ such that: }\\
			vv'\in\Sigma_{\omega,n+k} \text{ and } T_{\omega}^{vv'}x_{\omega}^{vv'}=x_{\omega}^{vv'}\in X_{\omega}^{vv'}
			\end{array}\right\}\right)=1.
			\end{equation}
			
			Consider the following set 
			$$W''(\phi,\omega)=\left\{x\in X_{\omega}:\begin{array}{l}
			x\in U_{\omega}^v \text{ and }\|T_{\omega}^{v}x-x\|\leq \exp(S_{|v|}\phi(\omega,x))\\
			\text{ for infinitely many } v\in\Sigma_{\omega,*}
			\end{array}
			\right\}.$$	
			The Hausdorff dimension of $W''(\phi,\omega)$ is equal to the unique root $q_0$ of the equation $P(q(\Psi+\Phi))=0$.
		}
	\end{corollary}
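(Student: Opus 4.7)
The proof of Corollary~\ref{W''} closely parallels that of Theorem~\ref{main} and Corollary~\ref{W'}; I would address the two inclusions separately. For the upper bound, fix $q>q_0$, so that $P(q(\Psi+\Phi))<0$. For each $v\in\Sigma_{\omega,n}$ the map $g_\omega^v\colon U\to U$ is a contraction by \eqref{cpsi}-\eqref{distortion}, and therefore admits a unique Banach fixed point $y_v^\ast\in U$. Writing $x=g_\omega^v(y)$ with $y=T_\omega^v x$, the recurrence condition reads $\|y-g_\omega^v(y)\|\le \exp(S_n\phi(\omega,x))$, and the distortion estimate together with the triangle inequality confines $y$ to a ball about $y_v^\ast$ of radius $\lesssim \exp(S_n\Phi(\omega,\underline v))$ for any $\underline v\in[v]_\omega$. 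Pulling back under $g_\omega^v$, the whole set lies in a ball of diameter $\lesssim \exp\bigl(S_n(\Psi+\Phi)(\omega,\underline v)\bigr)$. Summing the $q$-th powers of these diameters over $v\in\Sigma_{\omega,n}$ and $n$ sufficiently large produces a convergent series by the pressure equation, whence $\dim_H W''(\phi,\omega)\le q_0$. This step requires no hypothesis beyond \eqref{cpsi}-\eqref{distortion}.

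For the lower bound I would reduce to Corollary~\ref{W'}. For each $v\in\Sigma_{\omega,n}$, assumption~\eqref{Ass W''-1} yields an extension $w=vv'\in\Sigma_{\omega,n+k}$ with $k=k(v)\le M''(\sigma^{n-1}\omega)$ together with a fixed point $x_\omega^w\in X_\omega^w$ of $T_\omega^w$; note that $x_\omega^w\in X_{\sigma^{n+k}\omega}$ as well, since $T_\omega^w$ maps $X_\omega^w$ onto $X_{\sigma^{n+k}\omega}$. Define $z_{\sigma^{|w|}\omega}^w:=x_\omega^w$ for each such $w$, and extend arbitrarily to all words in a way that makes~\eqref{Ass W'-1} hold with some $M'$ comparable to $M''$. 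Applying Corollary~\ref{W'} produces a $W'$-set of Hausdorff dimension $q_0$. To translate this into $W''(\phi,\omega)$, I would use $T_\omega^w x_\omega^w=x_\omega^w$ together with the distortion bound to obtain
\[
\|T_\omega^w x-x\|\le\|T_\omega^w x-x_\omega^w\|+\|x_\omega^w-x\|\le\bigl(1+C\exp(S_{|w|}\Psi(\omega,\cdot))\bigr)\|T_\omega^w x-x_\omega^w\|,
\]
so that any point $x$ approximating the targets at rate $\tfrac12\exp(S_{|w|}\phi(\omega,x))$ for infinitely many $w$ of the form $vv'$ automatically lies in $W''(\phi,\omega)$. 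Since replacing $\phi$ by $\phi-\log 2$ does not affect the pressure root, this delivers $\dim_H W''(\phi,\omega)\ge q_0$.

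The principal obstacle is that the targets above are only naturally defined for a subclass of words, namely those ending in a ``good'' extension. The ubiquity-based lower bound underlying Corollary~\ref{W'} must therefore be revisited to confirm that this subclass is still abundant enough to force the full dimension. Since~\eqref{Ass W''-1} allows every $v\in\Sigma_{\omega,n}$ to be completed to a good word by appending at most $M''(\sigma^{n-1}\omega)$ symbols, and since $M''(\sigma^{n-1}\omega)$ grows sublinearly in $n$ along $\mathbb P$-a.e.\ $\sigma$-orbit, the Birkhoff sums $S_{n+k}\Phi$ and $S_{n+k}\Psi$ have the same almost sure asymptotics as $S_n\Phi$ and $S_n\Psi$; consequently the pressure root $q_0$ is unchanged and the restricted ubiquity construction still attains it. Carrying out this control on the random offset $k(v)$ is the only place where the full strength of the ergodicity assumption is used.
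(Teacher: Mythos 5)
Your overall strategy — reduce $W''$ to a $W'$-type set by taking the targets to be the fixed points $x_\omega^{vv'}$ supplied by \eqref{Ass W''-1}, then compare $\|T_\omega^w x - x\|$ with $\|T_\omega^w x - x_\omega^w\|$ using the triangle inequality and the expansion of $T_\omega^w$ — is exactly what the paper does in Section~6, and the upper bound via the Banach fixed point and the pressure equation matches the paper's (the paper simply declares the upper bound ``the same as Section~4''). However, the sentence ``Since replacing $\phi$ by $\phi-\log 2$ does not affect the pressure root'' is false and is the one real gap in your write-up. Indeed $P\bigl(q(\Psi+\Phi-\log 2)\bigr)=P\bigl(q(\Psi+\Phi)\bigr)-q\log 2$, and since $P(q(\Psi+\Phi))$ is strictly decreasing in $q$ with $P(q_0(\Psi+\Phi))=0$, the root of $P(q(\Psi+\Phi))=q\log 2$ is strictly smaller than $q_0$. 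Applying Corollary~\ref{W'} with $\phi-\log 2$ therefore only yields $\dim_H W''(\phi,\omega)\ge q_0'$ for some $q_0'<q_0$, which does not finish the lower bound.

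The correct repair, which is what the paper does implicitly when it says the extra term ``can be ignored'' because $T_\omega^w$ is ``essentially expanding,'' is that the multiplicative error is not a fixed constant $\tfrac12$ but $\bigl(1+C\exp(S_{|w|}\Psi)\bigr)^{-1}$, and by \eqref{Control Psi lower} one has $\exp(S_{|w|}\Psi)\le e^{-c|w|}$, so this factor converges to $1$ exponentially fast. Equivalently, the effective shrinking rate is $\exp\bigl(S_{|w|}\phi - \delta_{|w|}\bigr)$ with $\delta_n\lesssim e^{-cn}$, and since $\tfrac1n S_n(\cdot)$ averages are unaffected by a summable additive perturbation the pressure root is unchanged; concretely, the ubiquity construction of Theorem~\ref{theorem lower bound} already allows a slack of $n\varepsilon_i^3$ in the radius $r_\omega^{w\ast v\ast}$ (see the definition before \eqref{Choice of v'}), and $\delta_n=o(n\varepsilon_i^3)$ is absorbed there. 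With that correction your argument agrees with the paper's; a secondary presentational issue is that you need not ``extend the targets arbitrarily to all words,'' since the construction in Step~2 only ever evaluates $z^w$ at the good words produced by \eqref{Ass W''-1} (the paper exploits this by observing directly that \eqref{Ass W''-1} implies \eqref{Ass W'-1}).
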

	
	$W''(\phi,\omega)$ can be seen as the points $x$ whose orbits come back closer and closer to $x$ at a rate $\phi$ possibly depending on $x$ which is also called "quantitative Poincar\'{e} recurrence property".  Equation \eqref{Ass W''-1} in the assumption of corollary~\ref{W''} is not surprising. First, for $\mathbb{P}$-a.e. $\omega\in\Omega$, for any $v\in\Sigma_{\omega,n}$ with $n$ large enough, the fixed $x_{\omega}^v$ of $T_{\omega}^{v}$ (also $g_{\omega}^{v}$) exists. In fact we just need to notice \eqref{cpsi} and \eqref{distortion}, using Birkhoff ergodic theorem, we know that the map $g_{\omega}^{v}$ is a contraction, the contraction mapping principle ensures the existence of the fixed point.
	Second, we want to deal with $\|T_{\omega}^{v}x-x\|\leq\exp(S_{|v|}\phi(\omega,x))$ which can be seen as they are very near to the fixed point of $T_{\omega}^{v}$. Third, since we want to deal with the fixed point of $T_{\omega}^{v}:X_{\omega}^v\to X_{\sigma^{|v|}\omega}$, it is reasonable to assume the fixed point is in $X_{\omega}^v\cap X_{\sigma^{|v|}\omega}$. At last, in the deterministic situation with full coding, it is easy to see that the fixed point is inside the attractor.
	
	\begin{remark}
		\begin{itemize}
			\item In fact if we have the separation condition, that is $U_{\omega}^{s_1}\cap U_{\omega}^{s_1}=\emptyset$ for any $1\leq s_1,s_2\leq l(\omega)$ with $s_1\neq s_2$, we can define $T_{\omega}:\cup_{1\leq s\leq l(\omega)}U_{\omega}^{s}\to U$ such that $T_{\omega}|_{U_{\omega}^{s}}=T_{\omega}^s$. In our setting we know their inners are pairwise disjoint, but they may intersect on their boundary where there is a trouble to define $T_{\omega}$. 
			
			\item In fact, both \eqref{cpsi} and \eqref{cphi} can be replaced as: there exists $n\in\mathbb{N}$ such that
			\begin{equation*}\label{cpsi'}
			-\int_{\Omega}\sup_{v\in\Sigma_{\omega,n}}\sup_{x\in U_{\omega}^v}S_n\widetilde\psi(\omega,x)\mathrm{d}\mathbb{P}(\omega)> 0,
			\end{equation*} 
			where $\widetilde\psi\in\{\psi,\phi\}$
		\end{itemize}
	\end{remark}
	
	Let us make some comments on our setting and result:
	\begin{itemize}
		\item 
		In many works on the shrinking target problems (for example \cite{Hill1995,Hill1997,Stratmann2002}), there will be conformal situation which fails in our  case. This takes a big trouble to control measures of balls. Since we are dealing with higher dimension, it is not easy to use the trick in \cite{Yuan2017DCDS,Yuan2017} where we give a good control of neighbor cylinders.  Without  the method in \cite{Urbanski2002,Reeve2011}, we overcome such a trouble (see the proof of lemma \ref{lemma: Control zeta Bxr}) by using the cylinders with radius about $r$ to cover a ball $B(x,r)$, and noticing the number of them is not so big since they are inside $B(x,2r)$.  
		\item As was noted, the maps $g_{\omega}^s$ with $1\leq s\leq l(\omega)$ may not be  contraction, but they are contraction in average which means they are contractions if we look for a long time. 
		\item We now deal with subshift situation which have not been considered before.  
		\item As in \cite{Stratmann2002,Urbanski2002,Tan2011,Reeve2011,Bugeaud2014,Li2014,Seuret2015,Persson2017}, they are dealing the deterministic situation. In our case, we deal with the random iterated function systems with respect to a random subshift. The randomness makes many more models are fall in our situation, but more difficult to treat. Especially, the random variable $M$ in the topologically mixing property is no longer constant. Furthermore, the random iterated function systems in our situation is of $C^1$ which makes the pressure function no longer differentiable, so we need to distribute the measure by step in the manner of approximation.   Here we define a sequence of good sets $(\Omega_i)_{i\in\mathbb{N}}$ such that each point in them has good properties we want. Also, the ubiquity theorem (see \cite{Dodson1990,BS0,Beresnevich2006,BS2}) has been developed in our situation. 
		\item In \cite{Fishman2015}, if we add some restriction on $Q=(q_i)_{i\in\mathbb{N}}$, such as $limsup_{n\to \infty}\frac{\sum_{i=0}^{n+1}q_i}{n}< +\infty$, the result can be seen as a good sample in our theorem.
		\item We have consider two classes of shrinking target problems. The one is for given targets (see theorem~\ref{main} and corollary~\ref{W'}), the other is with respect to quantitative Poincar\'{e} recurrence properties (see corollary~\ref{W''}). The sequence $(z_{\sigma^n\omega})_{n\in\mathbb{N}}$ in $W(\phi,\omega)$ of theorem~\ref{main} can be see as $(z_n)_{n\in\mathbb{N}}$ which are always to be $z$ in  deterministic system. Furthermore, $W'(\phi,\omega)$ is an extension of $W(\phi,\omega)$, and it also give a good description of $W''(\phi,\omega)$ as a special case when the fixed points are chosen.	
	\end{itemize}

	\section{Basic properties}\label{section:Basic properties}
	
	Now we will introduce random weak Gibbs measures and the related properties which have been prepared in the previous works. 
	
	For $\Upsilon\in\mathbb{L}^1(\Omega,C(\Sigma))$, the associated Ruelle-Perron-Frobenius  operator $\lan_{\Upsilon}^\omega: C^0(\Sigma_{\omega})\to C^0(\Sigma_{\sigma\omega})$ is defined as
	$$\lan_{\Upsilon}^\omega h(\vl)=\sum_{F_{\omega}\wl=\vl}\exp(\Upsilon(\omega,\wl))h(\wl),\ \ \forall\ \vl\in \Sigma_{\sigma\omega}.$$
	\begin{proposition}\label{eigen}\cite{Kifer1,MSU}
		Removing from $\Omega$ a set of $\mathbb{P}$-probability 0 if necessary, for all  $\omega\in \Omega$ there exists $\lambda(\omega)=\lambda^{\Upsilon}(\omega)>0$ and  a probability measure $\widetilde \mu_{\omega}=\widetilde\mu^\Upsilon_\omega$ on $\Sigma_{\omega}$ such that $(\mathcal{L}_{\Upsilon}^\omega)^*\widetilde \mu_{\sigma\omega}=\lambda(\omega)\widetilde \mu_{\omega}$.
	\end{proposition}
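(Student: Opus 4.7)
\medskip

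\noindent\textbf{Proof proposal.} The plan is to construct $\widetilde\mu_\omega$ as the $\mathbb{P}$-almost sure weak-$\ast$ limit of a backward iteration of the normalised dual random transfer operators, and then to read off $\lambda(\omega)$ from the normalising factor. For each $\omega$ I would consider the map
$$T_\omega\nu \;:=\; \frac{(\mathcal{L}_\Upsilon^\omega)^{\ast}\nu}{(\mathcal{L}_\Upsilon^\omega)^{\ast}\nu(\mathbf 1)}, \qquad \nu\in\mathcal{P}(\Sigma_{\sigma\omega}),$$
which is well defined and continuous from $\mathcal{P}(\Sigma_{\sigma\omega})$ into $\mathcal{P}(\Sigma_{\omega})$: indeed, $\Upsilon\in\mathbb{L}^1(\Omega,C(\Sigma))$ together with the hypothesis that every row of $A(\omega)$ has a nonzero entry gives $\mathcal{L}_\Upsilon^\omega\mathbf 1 > 0$ on $\Sigma_{\sigma\omega}$, so the normalising denominator $\int \mathcal{L}_\Upsilon^\omega\mathbf 1\,\mathrm{d}\nu$ does not vanish. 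An equivariant section satisfying $T_\omega(\widetilde\mu_{\sigma\omega})=\widetilde\mu_\omega$ is exactly an eigen-section of $(\mathcal{L}_\Upsilon^\omega)^{\ast}$, with eigenvalue
$$\lambda(\omega) \;=\; \int_{\Sigma_{\sigma\omega}} \mathcal{L}_\Upsilon^\omega\mathbf 1 \,\mathrm{d}\widetilde\mu_{\sigma\omega},$$
so it suffices to produce such an equivariant section.

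Next, I would fix a measurable reference section $\omega\mapsto\nu_\omega^{(0)}\in\mathcal{P}(\Sigma_\omega)$ (for instance the uniform measure on the $1$-cylinders indexed by $\{1,\dots,l(\omega)\}$) and set
$$\widetilde\mu_\omega^{(n)}\;:=\; T_\omega\circ T_{\sigma\omega}\circ\cdots\circ T_{\sigma^{n-1}\omega}\bigl(\nu_{\sigma^{n}\omega}^{(0)}\bigr),$$
which satisfies the one-step identity $\widetilde\mu_\omega^{(n)}=T_\omega\bigl(\widetilde\mu_{\sigma\omega}^{(n-1)}\bigr)$. The heart of the argument is to show that this sequence is weak-$\ast$ Cauchy with a limit $\widetilde\mu_\omega$ independent of the reference section. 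The natural tool is the Birkhoff--Hilbert projective metric on the cone of strictly positive continuous functions: on the full-measure set where $A(\omega)A(\sigma\omega)\cdots A(\sigma^{M(\omega)-1}\omega)$ is positive, the $M(\omega)$-fold composition of the forward operators sends this cone into its interior with finite projective diameter and therefore contracts strictly there. Dualising and iterating along the orbit $(\sigma^{-k}\omega)_k$ would then yield a geometric rate of contraction modulated by the sequence of return times $M(\sigma^{-k}\omega)$, from which weak-$\ast$ convergence and uniqueness of the limit both follow.

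Passing to the limit in $\widetilde\mu_\omega^{(n)}=T_\omega\bigl(\widetilde\mu_{\sigma\omega}^{(n-1)}\bigr)$ delivers the desired equivariance $T_\omega(\widetilde\mu_{\sigma\omega})=\widetilde\mu_\omega$, equivalent to $(\mathcal{L}_\Upsilon^\omega)^{\ast}\widetilde\mu_{\sigma\omega}=\lambda(\omega)\widetilde\mu_\omega$; measurability of $\omega\mapsto\widetilde\mu_\omega$ is inherited from the finite iterates $\widetilde\mu_\omega^{(n)}$, which are measurable by construction and whose weak-$\ast$ limit is measurable. Positivity and measurability of $\omega\mapsto\lambda(\omega)$ are then immediate from the explicit formula above. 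I expect the principal obstacle to lie in the contraction step when $M$ is not bounded: the mixing time is only almost surely finite, so the pointwise Hilbert-metric contraction at return times has to be combined with Birkhoff's ergodic theorem applied to $\omega\mapsto M(\omega)$ and to $\omega\mapsto\|\Upsilon(\omega)\|_\infty$ in order to turn instantaneous contraction into a genuine $\mathbb{P}$-a.s. Cauchy estimate. The hypothesis $\int\log l\,\mathrm{d}\mathbb{P}<\infty$ is exactly what is needed to keep the projective diameters under control along these averages.
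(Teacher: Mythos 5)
The paper does not prove this proposition; it is cited as a known result from \cite{Kifer1,MSU}, so there is no internal proof to compare against. Your sketch does outline the standard Ruelle--Perron--Frobenius strategy for random expanding maps from those references: normalize the dual transfer operator, pull back a reference section, and establish convergence via Hilbert projective cone contraction over the mixing times. The identification $\lambda(\omega)=\int \mathcal{L}_\Upsilon^\omega\mathbf 1\,\mathrm{d}\widetilde\mu_{\sigma\omega}$ obtained by evaluating the eigenvalue relation on $\mathbf 1$ is correct, as is the observation that passing to the limit in the one-step identity $\widetilde\mu_\omega^{(n)}=T_\omega(\widetilde\mu_{\sigma\omega}^{(n-1)})$ delivers equivariance.

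However, as a proof the proposal has a genuine gap, and you have flagged it yourself: the Cauchy estimate is never actually established. You correctly identify the cone-contraction mechanism, but do not produce a quantitative projective-diameter bound for the composed operator $T_\omega\circ\cdots\circ T_{\sigma^{n-1}\omega}$, and the claim that Birkhoff's ergodic theorem applied to $M$ and $\|\Upsilon(\cdot)\|_\infty$, together with $\int\log l\,\mathrm d\mathbb P<\infty$, suffices to control the modulated contraction rate is asserted rather than demonstrated. This is exactly the technical content of the theorem, so the proposal is a plan rather than a proof. Additionally, the contraction in your construction happens along the \emph{forward} orbit $\omega,\sigma\omega,\dots,\sigma^{n-1}\omega$ (positivity of $A(\omega)\cdots A(\sigma^{n-1}\omega)$ enters for $n\ge M(\omega)$), not along $(\sigma^{-k}\omega)_k$ as written; the reference to $M(\sigma^{-k}\omega)$ is the wrong direction. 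Finally, to conclude you would also need a measurable selection argument to show $\omega\mapsto\widetilde\mu_\omega$ and $\omega\mapsto\lambda(\omega)$ are measurable in the sense of the random-measure framework of \cite{Kifer1,MSU}; claiming it is ``inherited from the finite iterates'' glosses over the need to produce a single full-measure set of $\omega$ on which the limit exists and the eigenvalue relation holds simultaneously.
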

	We call the  family $\{\widetilde \mu_{\omega}:\omega\in\Omega\}$ a random weak Gibbs measure on $\{\Sigma_\omega:\omega\in\Omega\}$ associated with $\Upsilon$. If we want, we can use $\{\widetilde \mu^{\Upsilon}_{\omega}:\omega\in\Omega\}$ to declare that it is with respect to $\Upsilon\in\mathbb{L}^1(\Omega,C(\Sigma))$. 
	
	Let $u=\{u_{n,\omega}\}$ be an extension and $\Upsilon\in \mathbb{L}^1(\Omega,C(\Sigma))$. Then for $(n,\omega)\in \mathbb{N}\times\Omega$
	\begin{equation*}
	Z_{u,n}(\Upsilon,\omega):=\sum_{v\in\Sigma_{\omega,n}}\exp\big (S_n\Upsilon(\omega,u_{n,\omega}(v))\big)
	\end{equation*}
	is called $n$-th partition function of $\Upsilon$ in $\omega$ with respect to $u$.
	
	Due to the assumption $\log (l)\in\mathbb{L}^1(\Omega,\mathbb{P})$, using the same method as in \cite{Gundlach,KL}, it is easy to prove the following lemma.
	
	\begin{lemma}\label{converge Pressure}\cite{Yuan2017DCDS}
		Let $u$ be any extension and $\Phi\in \mathbb{L}^1(\Omega,C(\Sigma))$.
		
		Then $\lim_{n\rightarrow\infty}\frac{1}{n}\log Z_{u,n}(\Phi,\omega)=P(\Phi)$ for $\mathbb{P}$-almost every $\omega\in\Omega$. This limit is independent of $u$.
	\end{lemma}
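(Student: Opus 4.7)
The strategy is to sandwich $Z_{u,n}(\Phi,\omega)$ between two quantities whose $\frac{1}{n}\log$ both converge to $P(\Phi)$. Introduce $Z_n^{\sup}(\omega) := \sum_{v\in\Sigma_{\omega,n}}\sup_{\underline v\in[v]_\omega}\exp(S_n\Phi(\omega,\underline v))$, so that by the definition \eqref{def Pressure}, $\frac{1}{n}\log Z_n^{\sup}(\omega) \to P(\Phi)$ for $\mathbb{P}$-a.e.\ $\omega$. Since $u_{n,\omega}(v)\in[v]_\omega$, the inequality $Z_{u,n}(\Phi,\omega)\leq Z_n^{\sup}(\omega)$ is immediate and yields $\limsup_n \frac{1}{n}\log Z_{u,n}(\Phi,\omega)\leq P(\Phi)$. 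For the matching lower bound I would invoke property (2) of $\mathbb{L}^1(\Omega,C(\Sigma))$: for $v\in\Sigma_{\omega,n}$ and $\underline v,\underline w\in[v]_\omega$, the iterates $F^i(\omega,\underline v)$ and $F^i(\omega,\underline w)$ agree on their first $n-i$ coordinates, so
$$|S_n\Phi(\omega,\underline v)-S_n\Phi(\omega,\underline w)|\leq \sum_{i=0}^{n-1}\var_{n-i}(\Phi,\sigma^i\omega)=:\epsilon_n(\omega).$$
Taking $\underline w = u_{n,\omega}(v)$ and the sup over $\underline v\in[v]_\omega$ gives $Z_n^{\sup}(\omega)\leq e^{\epsilon_n(\omega)}Z_{u,n}(\Phi,\omega)$, so the entire result reduces to showing $\epsilon_n(\omega)/n\to 0$ for $\mathbb{P}$-a.e.\ $\omega$.

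This is the main technical step. The sequence $m\mapsto \var_m(\Phi,\omega)$ is nonincreasing, dominated by $2\|\Phi(\omega)\|_\infty\in L^1(\mathbb{P})$, and converges to $0$ $\mathbb{P}$-a.s. Given $\delta>0$, dominated convergence yields some $M$ with $\int \var_M(\Phi,\cdot)\,\mathrm{d}\mathbb{P}<\delta$. Splitting the sum defining $\epsilon_n(\omega)$ at $i=n-M$ and using monotonicity in $m$ of the variations gives
$$\epsilon_n(\omega)\leq \sum_{i=0}^{n-M}\var_M(\Phi,\sigma^i\omega)+\sum_{i=n-M+1}^{n-1}2\|\Phi(\sigma^i\omega)\|_\infty.$$
Birkhoff's ergodic theorem controls the first sum: its limsup divided by $n$ is at most $\int \var_M(\Phi,\cdot)\,\mathrm{d}\mathbb{P}<\delta$. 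For the second sum, Birkhoff's theorem applied to the integrable function $\|\Phi(\cdot)\|_\infty$ shows that $\|\Phi(\sigma^n\omega)\|_\infty/n\to 0$ a.s.\ (differences of convergent Cesàro averages), so the $M-1$ tail terms divided by $n$ also vanish. Letting $\delta\to 0$ completes the verification $\epsilon_n(\omega)/n\to 0$, and combining with the upper bound gives the claimed limit. Independence of $u$ is then automatic, since both sandwich quantities depend only on $\Phi$.

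The main obstacle is precisely this almost-sure control of $\epsilon_n(\omega)/n$: the naive bound $\var_{n-i}(\Phi,\sigma^i\omega)\leq 2\|\Phi(\sigma^i\omega)\|_\infty$ is too crude because $\|\Phi\|_\infty$ is only integrable rather than bounded, and one genuinely needs the decay $\var_m\to 0$ together with ergodic-theoretic Cesàro averaging to beat the fluctuations of $\|\Phi(\sigma^i\omega)\|_\infty$ near $i=n$. Once this is done the two bounds match term-by-term, and the conclusion follows without further use of structure from the random subshift.
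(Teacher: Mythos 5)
Your sandwich argument is correct and is essentially the route that is implicit in the paper's citation. The only real content is the a.s.\ estimate $\frac{1}{n}\sum_{i=0}^{n-1}\var_{n-i}(\Phi,\sigma^i\omega)\to 0$, and here the two proofs diverge in style: the paper simply invokes Maker's ergodic theorem \cite{Maker1940} (it does so explicitly in the remark following Proposition~\ref{Length and measure}), whereas you prove this from scratch by splitting at $i=n-M$, applying Birkhoff to $\var_M$, and handling the boundary terms via the classical fact that $\|\Phi(\sigma^n\omega)\|_\infty/n\to 0$ a.s. Your argument is essentially a self-contained proof of the special case of Maker's theorem that is needed, so the content is the same; it just trades a citation for an elementary computation, which some readers may prefer.

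One small point worth making explicit: the definition of $\mathbb{L}^1(\Omega,C(\Sigma))$ only asserts that $\var_n(\Phi,\cdot)$ is \emph{some} random variable dominating the $n$-th oscillation, not that the family $(\var_n)_n$ is nonincreasing in $n$ nor that $\var_n\leq 2\|\Phi(\omega)\|_\infty$. Both properties can, however, be arranged without loss of generality by replacing $\var_n$ with $\min\bigl(\min_{m\leq n}\var_m,\, 2\|\Phi(\omega)\|_\infty\bigr)$, which still dominates the $n$-th oscillation (since the intrinsic oscillation at level $n$ is nonincreasing in $n$ and always $\leq 2\|\Phi(\omega)\|_\infty$), remains measurable and integrable, and still tends to $0$ a.s. You use both properties in your splitting argument, so stating the reduction would tighten the proof.
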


	Using a standard approach, it can be easily proven that for $\mathbb{P}$-almost every $\omega\in \Omega$, the Bowen-Ruelle formula holds, i.e. $\dim_H X_{\omega}=t_0$ where $t_0$ is the unique root of the equation $P(t\Psi)=0$.
	
	Noting the assumptions in subsection~\ref{subsection:A model of random iterated function systems}, especially $\psi\in\mathbb{L}^1(\Omega,\widetilde{C}(U))$ and \eqref{distortion}, using the same method in \cite[proposition 3]{Yuan2017DCDS}, we can proof:
	\begin{proposition}\label{Length and measure}{\rm
			For $\mathbb{P}$-almost every  $\omega\in \Omega$, there are sequences $(\epsilon(\psi,\omega,n))_{n\in\mathbb{N}}$ (also denote as $(\epsilon(\Psi,\omega,n))_{n\in\mathbb{N}}$) and $(\epsilon(\Upsilon,\omega,n))_{n\in\mathbb{N}}$  of positive numbers, decreasing to 0 as $n\to +\infty$, such that for all $n\in \mathbb{N}$, for all $v=v_0v_1\dots v_{n}\in\Sigma_{\omega,n}$, we have :
			\begin{enumerate}
				\item  For all $z\in U_{\omega}^v,$
				$$
				|U_{\omega}^v|\leq \exp(S_n\psi(\omega,z)+n\epsilon(\psi,\omega,n)),$$
				furthermore, there exists a ball $B$ with radius $\exp(S_n\psi(\omega,z)-n\epsilon(\psi,\omega,n))$ such that $B\subset U_{\omega}^v$. 
				
				Hence for all $\vl\in [v]_{\omega}$,
				$$|U_{\omega}^v|\leq \exp(S_n\Psi(\omega,\vl)+n\epsilon(\Psi,\omega,n)),$$
				and  there exists a ball $B$ with radius $\exp(S_n\Psi(\omega,\vl)-n\epsilon(\Psi,\omega,n))$ such that $B\subset U_{\omega}^v$.
				\item The measure $\widetilde{\mu}^{\Upsilon}_{\omega}$ (see details in proposition~\ref{eigen}) exists, and for all $\vl\in[v]_{\omega}$, we have 
				$$
				\exp(-n\epsilon(\Upsilon,\omega,n))\leq \frac{\widetilde\mu^{\Upsilon}_{\omega}([v]_{\omega})}{\exp(S_n\Upsilon(\omega,\vl)-nP(\Upsilon)}\leq \exp(n\epsilon(\Upsilon,\omega,n)).$$
		\end{enumerate}}
	\end{proposition}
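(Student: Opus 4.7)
The plan is to mimic the proof of Proposition~3 in \cite{Yuan2017DCDS}, treating the geometric statement (1) and the Gibbs statement (2) separately. For the upper bound in (1), I would iterate the distortion inequality \eqref{distortion} along the composition $g_\omega^v=g_\omega^{v_0}\circ g_{\sigma\omega}^{v_1}\circ\cdots\circ g_{\sigma^{n-1}\omega}^{v_{n-1}}$. Starting from $U$ of finite diameter and applying $g_{\sigma^{n-1}\omega}^{v_{n-1}}$ first yields a diameter at most $|U|\exp(\psi(\sigma^{n-1}\omega,v_{n-1},\cdot)+\var(\psi,\sigma^{n-1}\omega,|U|))$; iterating gives
\begin{equation*}
|U_\omega^v|\le |U|\exp\Big(S_n\psi(\omega,z)+\sum_{i=0}^{n-1}\var(\psi,\sigma^i\omega,\rho_i(\omega,v))\Big),
\end{equation*}
where $\rho_i(\omega,v)$ is the diameter of the inner image at step $i$. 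Because $c_\psi>0$ forces $\frac1n S_n\psi\to -c_\psi$ by Birkhoff, $\rho_i(\omega,v)\to 0$, and condition~(3) of $\mathbb L^1(\Omega,\widetilde C(U))$ gives $\var(\psi,\sigma^i\omega,\rho_i)\to 0$; a dominated Ces\`aro argument (the variations are controlled by $2\|\psi(\sigma^i\omega)\|_\infty$, which is integrable) absorbs the whole sum into $n\epsilon(\psi,\omega,n)$ with $\epsilon(\psi,\omega,n)\to 0$.

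For the matching lower bound, fix once and for all a ball $B_0=B(x_0,r_0)\subset U$ (possible since $U=\overline{\mathrm{Int}(U)}$). The lower half of \eqref{distortion} propagated through the composition yields $g_\omega^v(B_0)\supset B(g_\omega^v(x_0),r_0\exp(S_n\psi(\omega,x_0)-n\epsilon(\psi,\omega,n)))\subset U_\omega^v$. Changing the reference point from $x_0$ to an arbitrary $z\in U_\omega^v$ costs only another distortion term, which can be absorbed after possibly inflating $\epsilon(\psi,\omega,n)$. The $\Psi$-versions follow immediately: for $\vl\in[v]_\omega$ one has $\pi_\omega(\vl)\in U_\omega^v$, and applying the previous bounds with $z=\pi_{\sigma^i\omega}(F^i(\omega,\vl))$ at each step together with $S_n\Psi(\omega,\vl)=\sum_{i=0}^{n-1}\psi(\sigma^i\omega,v_i,\pi_{\sigma^i\omega}(F^i(\omega,\vl)))$ yields the stated inequalities, with a possibly larger sequence $\epsilon(\Psi,\omega,n)$ still tending to $0$.

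For (2), I would iterate the eigenequation of Proposition~\ref{eigen}: taking duals repeatedly,
\begin{equation*}
\widetilde\mu_\omega^\Upsilon([v]_\omega)=\frac{1}{\prod_{i=0}^{n-1}\lambda(\sigma^i\omega)}\int_{\Sigma_{\sigma^n\omega}}\mathbf 1_{\{v\ul\in\Sigma_\omega\}}\exp\big(S_n\Upsilon(\omega,v\ul)\big)\,d\widetilde\mu_{\sigma^n\omega}^\Upsilon(\ul).
\end{equation*}
Condition~(2) of $\mathbb L^1(\Omega,C(\Sigma))$ gives $|S_n\Upsilon(\omega,v\ul)-S_n\Upsilon(\omega,\vl)|\le\sum_{i=0}^{n-1}\var_{n-i}(\Upsilon,\sigma^i\omega)$ for any fixed $\vl\in[v]_\omega$, which is $o(n)$ a.s.\ by the same dominated Birkhoff argument as above. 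Lemma~\ref{converge Pressure} applied to a suitable extension shows $\frac1n\log\prod_{i=0}^{n-1}\lambda(\sigma^i\omega)\to P(\Upsilon)$, and the remaining mass $\int\mathbf 1_{\{v\ul\in\Sigma_\omega\}}d\widetilde\mu_{\sigma^n\omega}^\Upsilon$ lies in $[e^{-n\delta_n(\omega)},1]$ with $\delta_n(\omega)\to 0$: the upper bound is trivial, while the lower bound uses the random topological mixing (via $M$) to freely extend $v$ by an admissible short word, combined with the already-established Gibbs upper bound iterated on the short extension. Packaging all error contributions into one sequence produces $\epsilon(\Upsilon,\omega,n)\to 0$ as required.

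The main obstacle is producing a single sequence $\epsilon(\cdot,\omega,n)$ that works \emph{uniformly in $v\in\Sigma_{\omega,n}$}. Fortunately, every error contribution above (variations $\var(\psi,\sigma^i\omega,\cdot)$ and $\var_k(\Upsilon,\sigma^i\omega)$, the convergence $\frac1n\log\lambda^n(\omega)\to P(\Upsilon)$, and the mixing correction $\delta_n$) depends only on $\omega$ along its orbit, while the $v$-dependence is confined to the Birkhoff sums $S_n\psi(\omega,z)$ and $S_n\Upsilon(\omega,\vl)$ that sit in the main exponentials. Hence the uniform sequences $\epsilon(\psi,\omega,n),\epsilon(\Psi,\omega,n),\epsilon(\Upsilon,\omega,n)$ can be constructed by taking the supremum of each error over the finite set $\Sigma_{\omega,n}$, and this supremum still tends to $0$ a.s.
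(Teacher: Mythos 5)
The paper does not prove this proposition directly --- it explicitly defers to the same argument as \cite[Proposition~3]{Yuan2017DCDS} --- and your reconstruction follows exactly that standard route: iterate the bounded-distortion estimate \eqref{distortion} for the cylinder diameters, iterate the dual RPF eigenrelation for the Gibbs estimate, and absorb the accumulated $o(n)$ error terms. The one piece of terminology worth tightening is that your ``dominated Ces\`aro argument'' is precisely Maker's ergodic theorem, which the paper cites by name in the remark following the proposition to obtain $\frac1n\sum_{i=0}^{n-1}\var_{n-i}(\Upsilon,\sigma^i\omega)\to 0$; and the lower bound on the continuation mass $\int\mathbf 1_{\{v\underline u\in\Sigma_\omega\}}\,d\widetilde\mu^{\Upsilon}_{\sigma^n\omega}$, which you treat somewhat briskly via mixing, is the genuinely delicate step of the argument, but your indicated strategy (extend $v$ by an admissible word of length at most $M(\sigma^{n}\omega)$ and apply the already-obtained Gibbs upper bound, noting $M(\sigma^n\omega)=o(n)$ along the orbit) is the right one.
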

	
	\begin{remark}
		{\rm 
			\begin{itemize}
				\item By Maker's ergodic theorem from \cite{Maker1940}, we can get that for $\Upsilon\in\mathbb{L}^1(\Omega,C(\Sigma))$, for $\mathbb{P}$-a.e. $\omega\in\omega$ we have $\lim_{n\rightarrow\infty}\frac{\sum_{i=0}^{n-1}\var_{n-i}(\Upsilon,\sigma^{i}\omega)}{n}= 0$. 
				Without any difficulty, we can ask 
				\begin{equation*}\label{Control V-n}
				\frac{\sum_{i=0}^{n-1}\var_{n-i}(\Upsilon,\sigma^{i}\omega)}{n}\leq \epsilon(\Upsilon,\omega,n)
				\end{equation*}
				in proposition~\ref{Length and measure}
				\item Using item 3 in the definition of $\mathbb{L}^1(\Omega,\widetilde{C}(U))$, also by Maker's ergodic theorem, we can ask that for any $n\in\mathbb{Z}^+$, for any $v\in\Sigma_{\omega,n}$ and any $x,y$ in $U_{\omega}^v$, we have 
				\begin{equation}\label{Control V-n-psi}
				|S_n\phi(\omega,x)-S_n\phi(\omega,y)|\leq n\epsilon(\Phi,\omega,n)
				\end{equation}
				for any $x,y$ in $U_{\omega}^v$.
				\item  From item 1 of proposition~\ref{Length and measure}, we can easily get that for $\mathbb{P}$-almost every  $\omega\in \Omega$, there is a sequence $(\epsilon'(\psi,\omega,n))_{n\in\mathbb{N}}$ of positive numbers, decreasing to 0 as $n\to +\infty$, such that for all $n\in \mathbb{N}$, for any $v\in\Sigma_{\omega,n}$, $U_{\omega}^{v}$ contains a ball with radius $|U_{\omega}^{v}|^{1+\epsilon'(\Psi,\omega,n)}$. we do not distinguish between $\epsilon'(\Psi,\omega,n)$ and  $\epsilon(\Psi,\omega,n)$ for the convenient of writing.
			\end{itemize}
			
		}
	\end{remark}

	We now start a series of estimations that will be useful later. These estimations can be seen from~\cite{Yuan2017DCDS,Yuan2017}. 
	At first, choose $ \widetilde{M}\in\mathbb{N}$ large enough such that
	\begin{equation}\label{M big}
	\mathbb{P}(\{\omega\in\Omega: M(\omega)\leq \widetilde{M}\})>7/8.
	\end{equation}
	Second, Birkhoff ergodic theorem and Egorov's theorem yields there exist $C>0$ and a measurable subspace $\widetilde\Omega\subset \{\omega\in\Omega: M(\omega)\leq  \widetilde{M}\}$ such that $\mathbb{P}(\widetilde \Omega)>6/7$ and for all $\omega\in\widetilde\Omega$, $n\in\mathbb{N}$ and  for $\Upsilon\in\{\Phi,\Psi\}$,  one has
	\begin{equation}\label{Control upsilon upper}
	\max\left(\frac{1}{n}S_n\|\Upsilon(\omega)\|_{\infty},\ \frac{1}{n}S_n\|\Upsilon(\sigma^{-n+1}\omega)\|_{\infty}\right)\leq  C. 	
	\end{equation}
	Furthermore, there exist $c>0$ small enough and  $\mathcal N\in\mathbb{N}$ with $\mathcal{N}> \widetilde M$ large enough such that for any $n\geq N$ one has 
	\begin{equation}\label{Control Psi lower}
	S_n\Psi(\omega,\vl)\leq -cn
	\end{equation}

	Third, we say that a function $\Upsilon: \Sigma_{\Omega}\to \mathbb{R}$ is a random H\"{o}lder continuous potential if 
	\begin{align*}
	&(1)\ \Upsilon\in \mathbb{L}^1(\Omega,C(\Sigma));\\ 
	&(2)\  \text{$\exists\ \kappa\in (0,1]$:\ }
	\text{var}_n\Upsilon(\omega)\leq K_{\Upsilon}(\omega)e^{-\kappa n}, \text{ with $\int\log K_\Upsilon (\omega)\mathrm{d}\mathbb{P}(\omega)<\infty$}.
	\end{align*}
	For any $\{\varepsilon_i\}_{i\in\mathbb{N}}$ be a sequence of positive numbers, let $\{\Psi_i\}_{i\in\mathbb{N}},\{\Phi_i\}_{i\in\mathbb{N}}$ be two sequences of random H\"{o}lder potentials such that
	\begin{equation}\label{var Psiphi-i}
	\int_{\Omega} \max\{\|(\Psi-\Psi_i)(\omega)\|_{\infty},\|(\Phi-\Phi_i)(\omega)\|_{\infty}\}\, \mathrm{d}\mathbb{P}(\omega)<\varepsilon^3_i.
	\end{equation}
	
	For each $i\in\mathbb{N}$, using Birkhoff ergodic theorem and Egorov's theorem, there exists a measurable set $\Omega(i)\subset\widetilde \Omega$ and $\mathsf{N}_i\in\mathbb{N}$ such that 
	\begin{itemize}
		\item $\mathbb{P}(\Omega(i))>3/4$,
		\item for any $n\geq \mathsf{N}_i$,
		\begin{equation}\label{Control averagepsi upsilon}
		\left|S_n\|(\Psi-\Psi_i)(\sigma^{\mathcal{N}}\omega)\|_{\infty}-n\int_{\Omega} \|(\Psi-\Psi_i)(\omega)\|_{\infty}\ d\mathbb{P}\right| \leq n\varepsilon_i^3,
		\end{equation}
		and
		\begin{equation}\label{Control averagephi upsilon}
		\left|S_n\|(\Phi-\Phi_i)(\sigma^{\mathcal{N}}\omega)\|_{\infty}-n\int_{\Omega} \|(\Phi-\Phi_i)(\omega)\|_{\infty}\ d\mathbb{P}\right| \leq n\varepsilon_i^3,
		\end{equation}
		\item by using proposition \ref{Length and measure}, we can ask that for each $\omega\in\Omega(i)$, the measure $\widetilde\mu_{\sigma^{\mathcal{N}}\omega}^{i}=:\widetilde\mu_{\sigma^{\mathcal{N}}\omega}^{q_i(\Psi_i+\Phi_i)}$ is well defined such that for any $n\in\mathbb{N}$, for any $v\in\Sigma_{\sigma^{\mathcal{N}}\omega,n}$
		\begin{multline*}\label{measure u i}
		\exp(n\epsilon(q_i(\Psi_i+\Phi_i),\sigma^{\mathcal{N}}\omega,n))\\
		\leq\frac{\widetilde\mu^{i}_{\omega}([v]_{\omega})}{\exp(q_iS_n(\Psi_i+\Phi_i)(\sigma^{\mathcal{N}}\omega,\vl)}\\
		\leq \exp(n\epsilon(q_i(\Psi_i+\Phi_i),\sigma^{\mathcal{N}}\omega,n)),
		\end{multline*}
		and
		\begin{equation*}\label{Control epsilon-Lambda-n}    
		\epsilon(q_i(\Psi_i+\Phi_i),\sigma^{\mathcal{N}}\omega,n)\leq \varepsilon_i^3, \text{ for $n\geq \mathsf{N}_i$.}
		\end{equation*} 
	\end{itemize}

	Let
	\begin{equation*}
	F_{i,\beta,n}(\sigma^{\mathcal{N}}\omega,{\varepsilon})\\
	=:\left\{\vl\in \Sigma_{\sigma^{\mathcal{N}}\omega}:\begin{array}{l}
	\forall \vl'\in \Sigma_{\sigma^{\mathcal{N}}\omega}\text{ with } |\vl\wedge\vl'|\geq n\\
	{\left|\frac{S_{n}\Phi_{i}(\sigma^{\mathcal{N}}\omega,\vl')}{S_{n}\Psi_{i}(\sigma^{\mathcal{N}}\omega,\vl')}-\beta\right|\leq \varepsilon}
	\end{array}
	\right\}
	\end{equation*}
	and 
	$$E_{i,\beta}(\sigma^{\mathcal{N}}\omega,N,{\varepsilon})=\bigcap_{n\ge N}F_{i,\beta,n}(\sigma^{\mathcal{N}}\omega,{\varepsilon})\ {\rm and }\ E_{i,\beta}(\sigma^{\mathcal{N}}\omega,{\varepsilon})=\bigcup_{N\geq 1}E_{i,\beta}(\sigma^{\mathcal{N}}\omega,N,{\varepsilon}).$$
	
	Using the same method as in \cite[lemma 3.15]{Yuan2017DCDS}, more easily we can get:
	\begin{proposition}\label{initial} {\rm For all  $i\in\N$, for any $\varepsilon>0$, for all $\omega\in \Omega(i)$, the set $E_{i,\alpha_i}(\sigma^{\mathcal{N}}\omega,{\varepsilon})$ has full $\widetilde\mu_{\sigma^{\mathcal{N}}\omega}^{i}$-measure, where $\alpha_i=T_i'(q_i)$ and $P(q\Phi_i-T_i(q)\Psi_i)=0$. Then we can choose $\mathscr{N}_i(\omega)$ large enough such that $\widetilde\mu_{\sigma^{\mathcal{N}}\omega}^{i}(E_{i,\alpha_i}(\sigma^{\mathcal{N}}\omega,\mathscr{N}_i(\omega),{\varepsilon}))>1/2$. Furthermore we can also choose a set $\Omega_i\subset \Omega(i)$ with $\mathbb{P}(\Omega_i)>1/2$ and $\mathcal{N}_i\in\mathbb{N}$ with $\mathscr{N}_i(\omega)\leq \mathcal{N}_i$ for each $\omega\in \Omega_i$, so that $\widetilde\mu_{\sigma^{\mathcal{N}}\omega}^{i}(E_{i,\alpha_i}(\sigma^{\mathcal{N}}\omega,\mathcal{N}_i,{\varepsilon}))>1/2$.
		}
	\end{proposition}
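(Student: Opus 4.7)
The plan is to identify $\alpha_i$ explicitly as a Birkhoff ratio and then deduce the full-measure statement from Birkhoff's ergodic theorem together with the random H\"older regularity of $\Psi_i$ and $\Phi_i$; the remaining two assertions follow from continuity of the Gibbs measure along increasing unions and a routine Egorov-type argument, paralleling \cite[lemma 3.15]{Yuan2017DCDS}.

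First, since $q_i$ is the unique root of $P(q(\Psi_i+\Phi_i))=0$ and $T_i(q)$ is defined by $P(q\Phi_i-T_i(q)\Psi_i)=0$, uniqueness forces $T_i(q_i)=-q_i$, so both equations correspond to the same potential $q_i(\Psi_i+\Phi_i)$ and hence to the same random weak Gibbs measure $\widetilde\mu^i$. Let $\mu^i$ denote the $F$-invariant measure on $\Sigma_\Omega$ whose disintegration along $\omega$-fibres is $\{\widetilde\mu^i_\omega\}_{\omega\in\Omega}$, which is ergodic. Differentiating $P(q\Phi_i-T_i(q)\Psi_i)=0$ at $q=q_i$, using the differentiability of the topological pressure for random H\"older potentials, yields
\[
\alpha_i \;=\; T_i'(q_i) \;=\; \frac{\int \Phi_i \, d\mu^i}{\int \Psi_i \, d\mu^i},
\]
and by \eqref{Control Psi lower} together with \eqref{var Psiphi-i} the denominator is strictly negative.

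Next, apply Birkhoff's ergodic theorem to $(F,\mu^i)$ on the integrable functions $\Phi_i,\Psi_i\in \mathbb{L}^1(\Omega,C(\Sigma))$, which gives, for $\mu^i$-a.e.\ $(\omega,\vl)$, $S_n\Phi_i(\omega,\vl)/S_n\Psi_i(\omega,\vl)\to\alpha_i$. To propagate this from $\vl$ to every $\vl'$ with $|\vl\wedge \vl'|\ge n$, use the random H\"older estimate
\[
|S_n\Upsilon(\omega,\vl)-S_n\Upsilon(\omega,\vl')| \;\le\; \sum_{k=0}^{n-1} K_{\Upsilon}(\sigma^k\omega)\, e^{-\kappa(n-k)},\qquad \Upsilon\in\{\Phi_i,\Psi_i\},
\]
which is $o(n)$ $\mathbb{P}$-a.s.\ by Maker's ergodic theorem. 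Combined with the lower bound $|S_n\Psi_i|/n \gtrsim 1$, this uniformises the ratio over the cylinder $[\vl|_n]_{\sigma^{\mathcal{N}}\omega}$ and shows $\widetilde\mu^i_{\sigma^{\mathcal{N}}\omega}(E_{i,\alpha_i}(\sigma^{\mathcal{N}}\omega,\varepsilon))=1$ for $\mathbb{P}$-a.e.\ $\omega$. After discarding a further $\mathbb{P}$-null set from $\Omega(i)$, the full-measure statement holds for every $\omega\in\Omega(i)$.

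Finally, since $E_{i,\alpha_i}(\sigma^{\mathcal{N}}\omega,\varepsilon)=\bigcup_{N\ge 1}E_{i,\alpha_i}(\sigma^{\mathcal{N}}\omega,N,\varepsilon)$ is an increasing union, continuity from below of $\widetilde\mu^i_{\sigma^{\mathcal{N}}\omega}$ yields a smallest measurable $\mathscr{N}_i(\omega)\in\mathbb{N}$, finite on $\Omega(i)$, with $\widetilde\mu^i_{\sigma^{\mathcal{N}}\omega}(E_{i,\alpha_i}(\sigma^{\mathcal{N}}\omega,\mathscr{N}_i(\omega),\varepsilon))>1/2$. An Egorov argument applied to $B_N:=\{\omega\in\Omega(i):\mathscr{N}_i(\omega)\le N\}$ then produces $\mathcal{N}_i$ with $\mathbb{P}(B_{\mathcal{N}_i})>1/2$, and setting $\Omega_i:=B_{\mathcal{N}_i}$ concludes. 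The main obstacle is the first step: justifying the ratio identity for $\alpha_i$ requires the $C^1$-regularity of the random pressure at $q_i$ (valid because $\Psi_i,\Phi_i$ are random H\"older), together with the identification $T_i(q_i)=-q_i$ that makes the two Gibbs measures coincide and the implicit-differentiation formula applicable.
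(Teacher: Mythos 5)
Your proof is correct and follows the route the paper gestures at via its reference to [Yuan2017DCDS, lemma~3.15]: identify $T_i(q_i)=-q_i$ so that the two equilibrium states coincide, obtain $\alpha_i=\int\Phi_i\,d\mu^i/\int\Psi_i\,d\mu^i$ by implicit differentiation of the (analytic, because $\Psi_i,\Phi_i$ are random H\"older) pressure, invoke Birkhoff plus the $\var_n$-uniformisation (Maker) to get the full-measure statement, and finish with continuity from below and an Egorov-type selection of $\Omega_i$ and $\mathcal N_i$. One small technical remark: the disintegration of the $F$-invariant measure $\mu^i$ along fibres is $\{h^i_\omega\,\widetilde\mu^i_\omega\}$, not $\{\widetilde\mu^i_\omega\}$ itself (the latter are conformal/eigen measures from Proposition~\ref{eigen}); since the two are equivalent on each fibre, the $\widetilde\mu^i_{\sigma^{\mathcal N}\omega}$-a.e.\ conclusion still follows, but the phrasing should reflect this. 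Likewise, when you invoke Maker's theorem to get $\sum_{k<n}\var_{n-k}(\Phi_i,\sigma^k\omega)=o(n)$, the dominating function should be taken as $2\|\Phi_i(\omega)\|_\infty\in L^1(\mathbb P)$ (the paper only assumes $\int\log K_{\Phi_i}\,d\mathbb P<\infty$, so $K_{\Phi_i}$ itself need not be integrable); this is exactly the bound used in the paper's own remark after Proposition~\ref{Length and measure}, and with it your estimate goes through.
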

	\begin{remark}
		For any $\vl\in E_{i,\beta}(\sigma^{\mathcal{N}}\omega,\mathcal{N}_i,{\varepsilon})$, for any $n\geq\mathcal{N}_i$, $\forall \vl'\in \Sigma_{\sigma^{\mathcal{N}}\omega}$ with  $|\vl\wedge\vl'|\geq n$
		we have $\left|\frac{S_{n}\Phi_{i}(\sigma^{\mathcal{N}}\omega,\vl')}{S_{n}\Psi_{i}(\sigma^{\mathcal{N}}\omega,\vl')}-\beta\right|\leq \varepsilon$.
		The notation $\vl\wedge\vl'$ means the longest common prefix of $\vl$ and $\vl'$, that is: for $\vl=v_0v_1\cdots v_n\cdots$ and $\vl'=v_0'v_1'\cdots v_n'\cdots$ with $v_n=v_n'$ for $0\leq n\leq p-1$ and $v_{p}\neq v_{p}'$, define $\vl\wedge\vl'=v_0v_1\cdots v_{p-1}$
	\end{remark}
	\begin{notation}
		For any $\omega\in \Omega$, for any $i\in\mathbb{N}$ define 
		$$\theta(i,\omega,1)=\inf\{n\in\mathbb{N}:\sigma^n\in\Omega_i\}$$
		and for any $p\in\mathbb{N}$ with $p>1$ define 
		$$\theta(i,\omega,p)=\inf\{n\in\mathbb{N}: n>\theta(i,\omega,p-1)\text{ and }\sigma^n\in\Omega_i\}$$
	\end{notation}
	
	From Birkhoff ergodic theorem, for any $i\in\mathbb{N}$, for $\mathbb P$-a.e. $\omega\in\Omega$ 
	\begin{equation}\label{p/thta}
	\lim_{p\rightarrow\infty}\frac{p}{\theta(i,\omega,p)}=\mathbb{P}(\Omega_i)>1/2,
	\end{equation}
	then 
	\begin{equation}\label{D-theta/theta}
	\lim_{p\rightarrow\infty}\frac{\theta(i,\omega,p)-\theta(i,\omega,p-1)}{\theta(i,\omega,p-1)}=0.
	\end{equation}
	Since $\mathbb{N}$ is countable, $\mathbb P$-a.e. $\omega\in\Omega$, for any $i\in\mathbb{N}$, equations \eqref{p/thta}and 
	\eqref{D-theta/theta} also hold. There is no trouble that we can assume that they hold for all $\omega\in\Omega$. 
	
	\begin{remark}
		It is important that we introduce the sequences $\{\Psi_i\}_{i\in\mathbb{N}},\{\Phi_i\}_{i\in\mathbb{N}}$ of random H\"{o}lder potentials  to approximate $\Psi$ and $\Phi$. First, this can be done, see the details in \cite{Yuan2017DCDS}. Second, if we define a function $T$ that is the root of $P(q\Phi-T(q)\Psi)$, we just know the function $T$ is concave. But for the function $T_{i}$ which is the root of $P(q\Phi_i-T_i(q)\Psi_i)$, it is not only differentiable but also analytic (see \cite[chapter 9]{MSU} and \cite{Gundlach}). This will provide us lots of good informations. Furthermore, let $q_i$ be the root of $P(q(\Psi_i+\Phi_i))=0$, we can easily get that $\lim_{i\rightarrow\infty}q_i=q_0$. Recall that $q_0$ is the root of $P(q(\Psi+\Phi))=0$.  
	\end{remark}
	
	\section{Upper bound}\label{section:Upper bound}
	It is easy to check:
	\begin{equation*}\label{key}
	W(\phi,\omega)=\cap_{N\in\mathbb{N}}\cup_{n\geq N}\cup_{v\in\Sigma_{\omega,n}}\{x\in U_{\omega}^{v}:\|T_{\omega}^{v}x-z_{\sigma^{|v|}\omega}\|\leq \exp(S_{|v|}\phi(\omega,x))\}.
	\end{equation*}	
	Define $V_{\omega}^v=:\{x\in U_{\omega}^{v}:\|T_{\omega}^{v}x-z_{\sigma^{|v|}\omega}\|\leq\exp(S_{|v|}\phi(\omega,x))\}$,
	using \eqref{distortion} and the same method in proposition~\ref{Length and measure}, we can easily get there exists a sequence $\{\epsilon_n(\omega)\}$ of positive number such that $\epsilon_n(\omega)$ decreasing to $0$ as $n$ increasing to $\infty$ and for any $y\in U_{\omega}^{v}$,
	\begin{equation*}\label{Control length V-1}
	\left|V_{\omega}^v\right|\leq\exp(S_{|v|}(\psi+\phi)(\omega,y)+|v|\epsilon_{|v|}(\omega)) 
	\end{equation*}
	so that for any $\vl\in [v]_{\omega}$
	\begin{equation}\label{Control length V-2}
	\left|V_{\omega}^v\right|\leq\exp(S_{|v|}(\Psi+\Phi)(\omega,\vl)+|v|\epsilon_{|v|}(\omega)) 
	\end{equation}
	\begin{theorem}
		$\dim_H W(\phi,\omega)\leq q_0$.
	\end{theorem}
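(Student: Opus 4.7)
The plan is to exploit directly the natural limsup presentation
$$W(\phi,\omega)=\bigcap_{N\geq 1}\bigcup_{n\geq N}\bigcup_{v\in\Sigma_{\omega,n}}V_{\omega}^v,$$
which gives, for every $N\in\mathbb{N}$, an explicit cover of $W(\phi,\omega)$ by the sets $\{V_\omega^v:n\geq N,\,v\in\Sigma_{\omega,n}\}$. By \eqref{Control length V-2} together with the uniform Birkhoff bound \eqref{Control Psi lower} on $S_n\Psi$ and the analogous decay of $S_n\Phi$ coming from \eqref{cphi}, the diameters $|V_\omega^v|$ tend to $0$ uniformly as $n\to\infty$. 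Hence this family is an admissible cover for the Hausdorff $q$-measure computation once $N$ is large enough.

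Fix any $q>q_0$. Using \eqref{Control length V-2} and choosing, for every $v\in\Sigma_{\omega,n}$, one point $\vl_v\in[v]_\omega$, one gets
$$\sum_{n\geq N}\sum_{v\in\Sigma_{\omega,n}}|V_\omega^v|^q\leq \sum_{n\geq N}\exp\bigl(qn\epsilon_n(\omega)\bigr)\sum_{v\in\Sigma_{\omega,n}}\exp\bigl(qS_n(\Psi+\Phi)(\omega,\vl_v)\bigr).$$
The inner sum is exactly the partition function $Z_{u,n}(q(\Psi+\Phi),\omega)$ with $u_{n,\omega}(v)=\vl_v$, so by lemma \ref{converge Pressure} its $\frac{1}{n}\log$-rate converges $\mathbb{P}$-a.s. to $P(q(\Psi+\Phi))$.

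Now I would use that $q\mapsto P(q(\Psi+\Phi))$ is strictly decreasing: indeed $c_\psi,c_\phi>0$ force $\int \sup(\Psi+\Phi)\,\mathrm d\mathbb P<0$, which, combined with the usual variational/convexity argument for the pressure, makes $P$ strictly decreasing in $q$ with $P(q_0(\Psi+\Phi))=0$. Consequently $P(q(\Psi+\Phi))<0$. Pick $\delta>0$ so that $P(q(\Psi+\Phi))+2\delta<0$, and take $N$ large enough so that for every $n\geq N$ both $\frac1n\log Z_{u,n}(q(\Psi+\Phi),\omega)\leq P(q(\Psi+\Phi))+\delta$ and $q\epsilon_n(\omega)\leq \delta$. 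The series above is then dominated by a convergent geometric tail, and this tail vanishes as $N\to\infty$. Therefore $\mathcal{H}^q(W(\phi,\omega))=0$, whence $\dim_H W(\phi,\omega)\leq q$; letting $q\downarrow q_0$ gives the claim.

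There is no genuine obstacle here: this is a first-moment covering argument that works because the limsup structure of $W(\phi,\omega)$ yields a cover indexed by the symbolic cylinders, and the thermodynamic formalism already developed in section \ref{section:Basic properties} converts the sum of $q$-powers of diameters into the partition function governed by $P(q(\Psi+\Phi))$. The only mildly delicate routine points are (i) the uniform control of the diameter distortion term $\epsilon_n(\omega)$, handled by proposition \ref{Length and measure} and the remark after it, and (ii) verifying strict monotonicity of the pressure at $q_0$, which is standard from $c_\psi+c_\phi>0$.
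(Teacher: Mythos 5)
Your proof is correct and follows essentially the same route as the paper: cover $W(\phi,\omega)$ by the sets $V_\omega^v$, bound $|V_\omega^v|^q$ via \eqref{Control length V-2}, identify the resulting sum as a partition function of $q(\Psi+\Phi)$, and invoke lemma~\ref{converge Pressure} together with $P(q(\Psi+\Phi))<0$ for $q>q_0$ to get a summable tail. The only cosmetic differences are that you conclude $\mathcal H^q=0$ where the paper settles for $\mathcal H^q<\infty$, and you spell out the strict monotonicity of $q\mapsto P(q(\Psi+\Phi))$ which the paper implicitly absorbs into the phrase ``unique root $q_0$.''
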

	\begin{proof}
		For any $q>0$ and $\delta>0$ denote by $\mathcal{H}_{\delta}^q$ the $q$-dimensional Hausdorff pre-measure computed using coverings by set of diameter less than $\delta$. For any $N\in\mathbb{N}$, define $\delta_N:=\sup_{n\geq N}\sup_{v\in \Sigma_{\omega,n}}{|V_{\omega}^v|}$. Then for any $M\geq N$
		$$\mathcal{H}_{\delta_n}^q(W(\phi,\omega))\leq \sum_{n\geq M}\sum_{v\in \Sigma_{\omega,n}}{|V_{\omega}^v|^q}.$$
		Since \eqref{Control length V-2}, we have
		$$\sum_{v\in \Sigma_{\omega,n}}{|V_{\omega}^v|^q}\leq \sum_{v\in\Sigma_{\omega,n}}\exp(qS_{n}(\Psi+\Phi)(\omega,\vl)+nq\epsilon_{n}(\omega).$$
		If $q>q_0$, $P(q(\Psi+\Phi))<0$.  From lemma \ref{converge Pressure},  for $\mathbb{P}$-almost every  $\omega\in \Omega$, there exists $N'(\omega)\in\mathbb{N}$, for $n>N'(\omega)$ we get  $$\sum_{v\in\Sigma_{\omega,n}}\exp(qS_{n}(\Psi+\Phi)(\omega,\vl)+nq\epsilon_{n}(\omega))\leq \exp(n\frac{P(q(\Psi+\Phi))}{2}).$$
		This implies 
		$$\mathcal{H}_{\delta_n}^q(W(\phi,\omega))\leq \sum_{n\geq \max\{N'(\omega),N\}} \exp(n\frac{P(q(\Psi+\Phi))}{2})<\sum_{n\geq N'(\omega)} \exp(n\frac{P(q(\Psi+\Phi))}{2}),$$
		then $\mathcal{H}^q(W(\phi,\omega))\leq \sum_{n\geq N'(\omega)} \exp(n\frac{P(q(\Psi+\Phi))}{2})<\infty$.
		So $\dim_H W(\phi,\omega)\leq q$. Since $q>q_0$ is arbitrary, we get that $\dim_H X_{\omega}\leq q_0$.
	\end{proof}

	\section[Lower bound]{Lower bound}\label{section:Lower bound}
	
	\begin{theorem}\label{theorem lower bound}
		For $\mathbb{P}$-almost every $\omega\in\Omega$, there exists a set $K_{\omega}\subset W(\phi,\omega)$ and a probability measure $\eta_{\omega}$ on it such that $\dim_{H}\eta_{\omega}\geq q_0$. Then we have $\dim_H W(\phi,\omega)>q_0$.  
	\end{theorem}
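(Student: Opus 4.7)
The plan is to exhibit, for each sufficiently large $i\in\mathbb{N}$, a generalized Cantor subset $K_{\omega}^{(i)}\subset W(\phi,\omega)$ supporting a probability measure $\eta_{\omega}^{(i)}$ whose lower local dimension is at least $q_i-o_i(1)$, and then to invoke $q_i\to q_0$ (together with the monotonicity of Hausdorff dimension) to conclude. Throughout, I will use the preparatory material from Section~\ref{section:Basic properties}: the approximating H\"older potentials $\Psi_i,\Phi_i$, the sets $\Omega_i$, $\widetilde\Omega$, the measures $\widetilde\mu^{i}_{\sigma^{\mathcal{N}}\omega}$, and the concentration sets $E_{i,\alpha_i}(\sigma^{\mathcal{N}}\omega,\mathcal{N}_i,\varepsilon_i)$ on which the ratio $S_n\Phi_i/S_n\Psi_i$ is within $\varepsilon_i$ of $\alpha_i$.

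The Cantor construction proceeds inductively on generations $k$. I select cylinders of lengths $n_k$ of the form $n_k=\theta(i,\omega,p_k)-\mathcal{N}$ for a rapidly growing sequence $p_k$, so that $\sigma^{n_k+\mathcal{N}}\omega\in\Omega_i$ and the local estimates in Proposition~\ref{initial} apply. A generation-$k$ node is a cylinder $[v]_{\omega}$, $|v|=n_k$, chosen so that (i) it is an admissible extension of a generation-$(k-1)$ node, (ii) inside the associated $U_{\omega}^{v}$ lies the ball $V_{\omega}^{v}$ defined in Section~\ref{section:Upper bound}, which witnesses the shrinking-target condition at scale $n_k$, and (iii) the $\widetilde\mu^{i}$-masses of the children issued from a parent are comparable to $\exp(q_i S_{n_k-n_{k-1}}(\Psi_i+\Phi_i))$ on the appropriate subshift. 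The bridging from generation $k-1$ to generation $k$ is done by first appending, via topological mixing and the control \eqref{M big}, a word of length at most $\widetilde M$ that brings us to a cylinder in which the target $z_{\sigma^{n}\omega}$ is hit within the prescribed distance $\exp(S_n\phi(\omega,\cdot))$; after this bridge we complete the word freely up to length $n_k$, selecting a maximal family of admissible children with uniformly separated cylinders thanks to Proposition~\ref{Length and measure}. This is the step where the extension of the ubiquity theorem is deployed: at each generation, the hit-cylinders are shown to carry positive $\widetilde\mu^{i}$-density of the parent cylinder, not just to be topologically ubiquitous.

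I then define $\eta_{\omega}^{(i)}$ on $K_{\omega}^{(i)}$ in the Kolmogorov-consistent way suggested by (iii), dividing $\widetilde\mu^{i}_{\sigma^{\mathcal{N}}\omega}$-mass uniformly among siblings and pushing forward through $\pi_\omega$. To bound $\eta_{\omega}^{(i)}(B(x,r))$ for $x\in K_\omega^{(i)}$, I choose the generation $k$ for which $|U_{\omega}^{v(x,k)}|$ is comparable to $r$, control the number of generation-$k$ cylinders meeting $B(x,r)$ by the packing argument announced after Theorem~\ref{main} (cylinders of radius $\approx r$ are contained in $B(x,2r)$ and so have bounded overlap multiplicity), and combine with the concentration $|S_n\Phi_i/S_n\Psi_i-\alpha_i|\leq\varepsilon_i$ from Proposition~\ref{initial}, the distortion estimate \eqref{Control V-n-psi}, and \eqref{Control averagepsi upsilon}--\eqref{Control averagephi upsilon} to replace $\Psi_i+\Phi_i$ by $\Psi+\Phi$ up to a controlled error. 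The resulting inequality $\eta_{\omega}^{(i)}(B(x,r))\leq C\, r^{q_i-o_i(1)}$ yields $\dim_H\eta_{\omega}^{(i)}\geq q_i-o_i(1)$ via the mass distribution principle; letting $i\to\infty$ produces the desired $K_\omega,\eta_\omega$ in the statement.

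The main obstacle is the bridging step in the second paragraph: because the topological-mixing time $M$ and the distortion-control random variables are genuinely random (as stressed in the introductory remarks), and because $\psi,\phi$ are only $C^1$-type so the pressure $T$ is not a priori differentiable, neither the bridging lengths nor the distortions are uniform in $\omega$. I handle this by restricting to $\Omega_i\cap\{M\leq\widetilde M\}$ and using \eqref{M big}--\eqref{Control Psi lower} to ensure bridges of length $O(\widetilde M)$, which are absorbed by the fast growth of $n_k$ guaranteed by \eqref{p/thta}--\eqref{D-theta/theta}; the potential loss of mass on the bridges is controlled by \eqref{Control upsilon upper}, so the dimension loss over all generations is $o_i(1)$. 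Checking that the resulting $\eta_{\omega}^{(i)}$-measure of balls is genuinely controlled by cylinder measures, rather than being disrupted by non-separation of $U_{\omega}^{s_1},U_{\omega}^{s_2}$ on their boundaries, is the last delicate issue; this is where the existence of an interior ball of radius $|U_{\omega}^{v}|^{1+\epsilon'(\Psi,\omega,n)}$ inside each $U_\omega^v$ (the final remark after Proposition~\ref{Length and measure}) is essential.
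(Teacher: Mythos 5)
Your overall strategy --- a Cantor-like subset built by alternating Besicovitch extraction, topological-mixing bridges to reach a shrinking-target sub-cylinder, and a mass-distribution estimate that counts overlapping cylinders via the interior ball of radius $|U_{\omega}^{v}|^{1+\epsilon'(\Psi,\omega,n)}$ --- is the right one, and you correctly identify how \eqref{M big}--\eqref{Control Psi lower}, \eqref{p/thta}--\eqref{D-theta/theta}, and Proposition~\ref{initial} enter the argument. The one substantive deviation from the paper's proof is in the role of the approximation index $i$, and it leaves a gap in your final step.

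You build a \emph{separate} Cantor set $K_{\omega}^{(i)}$ for each \emph{fixed} $i$, with $\dim_H\eta_{\omega}^{(i)}\geq q_i-o_i(1)$, and then claim that ``letting $i\to\infty$ produces the desired $K_{\omega},\eta_{\omega}$ in the statement.'' Nothing is actually constructed in that last clause: you obtain a sequence of measures whose lower dimensions approach $q_0$, which does give $\dim_H W(\phi,\omega)\geq\sup_i\left(q_i-o_i(1)\right)=q_0$, but not the single pair $(K_{\omega},\eta_{\omega})$ with $\dim_H\eta_{\omega}\geq q_0$ that the theorem asserts. The paper's proof avoids this by a diagonal construction in which the generation index and the approximation index coincide: generation $i+1$ cylinders are selected inside each $U_{\omega}^{w}\in G_{\omega,i}$ using $\widetilde\mu^{i+1}_{\sigma^{m+\mathcal{N}}\omega}$, the concentration set $E_{i+1,\alpha_{i+1}}$, and the insertion bound of Lemma~\ref{lemma:Control Uwv'}, yielding the estimate $\eta_{\omega}(U_{\omega}^{w\ast v'(l)})\leq|U_{\omega}^{w\ast v'(l)}|^{q_{i+1}(1-4\varepsilon_{i+1})-\varepsilon_{i+1}^2}$ on the single measure $\eta_{\omega}$ and hence $\underline\dim_H(\eta_{\omega})\geq\lim_i q_i=q_0$. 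Your fixed-$i$ construction does suffice for the lower bound on $\dim_H W(\phi,\omega)$, which is what Theorem~\ref{main} ultimately uses, but to prove Theorem~\ref{theorem lower bound} as stated you must let the index advance with the generation rather than appeal to an unconstructed limit of Cantor sets.
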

	\begin{proof}
		From \eqref{Control upsilon upper} and \eqref{Control Psi lower}, we know that for $\mathbb{P}$-a.e. $\omega\in\Omega$, there exists $\mathcal{K}=\mathcal{K}(\omega)\in\mathbb{N}$ such that for all $n\geq \mathcal{K}$, for all $v\in \Sigma_{\Omega,n}$ we have 
		\begin{equation}\label{Control U--upper--lower}
		\exp(-2Cn)\leq |U_{\omega}^v|\leq \exp(-cn/2)
		\end{equation}
		
		We now fixed a sequence $\{\varepsilon_{i}\}_{i\in\mathbb{N}}$ of positive numbers deceasing to $0$ such that for any $i\in\mathbb{N}$, one has:
		\begin{equation}\label{Control varepsilon 1}
		2(18+4(\alpha_{i}+\varepsilon_{i}))\varepsilon_{i})/c<1,
		\end{equation}
		and
		\begin{equation}\label{Control varepsilon 2}
		\varepsilon_{i}<\min\{\frac{1}{d+2},\frac{1}{q_{i}(2+\alpha_{i})+d+1}\}.
		\end{equation}	
		\begin{description}
			\item[Step1]  Define $Y(\omega)=S_{\theta(1,\omega,1)}\|\Psi(\omega)\|_{\infty}+S_{\theta(1,\omega,1)}\|\Phi(\omega)\|_{\infty}.$
			choose $m=\theta(1,\omega,s)\in \mathbb{N}$ with $s\in\mathbb{N}$ large enough such that 
			\begin{itemize}
				\item $m\geq \mathcal{K}(\omega)$,
				\item $\mathcal N+\mathcal{N}_{1}\leq m\varepsilon^3_{i+1}$,
				\item for all $k\geq m$,
				\begin{equation*}\label{Control epsilon-Psi-1}
				\max\{\epsilon(\Psi,\omega,k),\epsilon(\Phi,\omega,k)\}\leq \varepsilon^3_{1},
				\end{equation*}
			\end{itemize}
			Fix a word $w\in\Sigma_{\omega,m}$, then $\sigma^{m}\omega\in\Omega_{1}$. 
			
			Now, define
			\begin{equation*}\label{E_{w}-1}
			E(\omega,w)=\left\{\pi_\omega(w\ast \vl):\vl\in E_{1,\alpha_{1}}(\sigma^{m+\mathcal{N}}\omega,\mathcal{N}_{1},{\varepsilon_{1}})
			\right\}
			\end{equation*}
			and noticing $\widetilde\mu^{1}_{\sigma^{m+\mathcal N}\omega}(E_{1,\alpha_{1}}(\sigma^{m+\mathcal{N}}\omega,\mathcal{N}_{1},{\varepsilon_{1}})>1/2$ by proposition~\ref{initial}, define
			\begin{equation*}\label{def tzeta-1}
			\widetilde\zeta_{\omega,w}([w\ast v]_{\omega})=\frac{\widetilde\mu^{1}_{\sigma^{m+\mathcal N}\omega}([v]_{\sigma^{m+\mathcal N}\omega}\cap E_{1,\alpha_{1}}(\sigma^{m+\mathcal{N}}\omega,\mathcal{N}_{1},{\varepsilon_{1}})}{\widetilde\mu^{1}_{\sigma^{m+\mathcal N}\omega}(E_{1,\alpha_{1}}(\sigma^{m+\mathcal{N}}\omega,\mathcal{N}_{1},{\varepsilon_{1}})},
			\end{equation*}
			and 
			\begin{equation*}\label{def zeta-1}
			\zeta_{\omega,w}={\pi_\omega}_*\widetilde\zeta_{\omega,w}=\widetilde\zeta_{\omega,w}\circ{\pi_\omega}^{-1}.
			\end{equation*} 	 	
			We will easily obtain that $\zeta_{\omega,w}(E(\omega,w))=1$.
			
			For $n\geq \mathcal{N}_{1}$, define
			$$\mathcal{F}_{w,n}=\{B(x,2|U_{\omega}^{w\ast v}|): x\in E(\omega,w)\cap U_{\omega}^{w\ast v}, v\in\Sigma_{\sigma^{m+\mathcal N},n}\}.$$
			From Besicovitch covering theorem $Q(d)$ families of disjoint balls namely
			$$\mathcal{F}^1_{w,n},\cdots,\mathcal{F}^{Q(d)}_{w,n}$$ can be extracted from $\mathcal{F}_{w,n}$, so that 
			$$
			E(w)\subset \bigcup_{j=1}^{Q(d)} \bigcup_{B\in \mathcal{F}^j_{w,n}}B.
			$$ Since $\zeta_{\omega,w}(E(w))=1$, there exists $j$ such that
			\begin{equation*}\label{}
			\zeta_{\omega,w}\Big(\bigcup_{B\in{\mathcal{F}^j_{w,n}}}B\Big)\geq \frac{1}{Q(d)}.
			\end{equation*}
			
			Again, we extract from $\mathcal{F}^j_{w,n}$ a finite family of pairwise disjoint balls $D(w,n)=\{B_1,\cdots,B_{j'}\}$ such that
			\begin{equation*}\label{Control zeta all ball-1}
			\zeta_{\omega,w}\Big(\bigcup_{B_l\in D(w,n)}B_l\Big)\geq \frac{1}{2Q(d)}.
			\end{equation*}
			For each $B_l\in D(w,n)$, there exists $y_l\in U_{\omega}^{w\ast v(l)}\cap E(w)$ such that $$B_l=B\big(y_l,2|U_{\omega}^{w\ast v(l)}|\big)\supset U_{\omega}^{w\ast v(l)}.$$
			Now we can get (see lemma~\ref{lemma: Control zeta Bxr} in general situation):
			
			For any $x\in E(\omega,w)$, for $r\leq |U_{\omega}^w| \exp(-(C+4)m\varepsilon_{1}^3)$,
			one has that 
			\begin{equation}\label{Control zeta Bxr-1}
			\zeta_{\omega,w}(B(x,r))\leq (3r^{-\varepsilon_{1}^3})^d (\frac{r}{|U_{\omega}^{w}|})^{q_{1}(1+\alpha_{1}-2\varepsilon_{1})}.
			\end{equation}

			Now choose $p_{1}\geq \mathcal N_{1}$ large such that 
			\begin{itemize}
				\item $\sigma^{m+\mathcal{N}+p_{1}}\omega\in\Omega_{1}$,
				\item for all $k\geq m+\mathcal{N}+p_1$,
				\begin{equation*}
				\max\{\epsilon(\Psi,\omega,k),\epsilon(\Phi,\omega,k)\}\leq \varepsilon^3_{2},
				\end{equation*}
				\item 
				\begin{equation*}\label{p1 large 1}
				p_{1}\geq \frac{(m+2\mathcal{N})C+Y(\omega)}{c\varepsilon_{1}^3/2},
				\end{equation*}
				and
				\begin{equation*}\label{p1 large 2}
				p_{1}\geq \frac{(q_{1}(1+\alpha_{1}-2\varepsilon_{1})+d+1)\log 2+\log(Q(d))}{c\varepsilon_{1}^3/2}),
				\end{equation*}
				\item $\mathcal N+\mathcal{N}_{2}\leq (m+\mathcal N+p_{1})\varepsilon^3_{2}$, that is $m+\mathcal N+p_{1}\geq    \frac{\mathcal N+\mathcal{N}_{2}}{\varepsilon^3_{2}}$, 
				\item for any $k\in\mathbb{N}$ such that $\theta(2,\omega,k)\geq m+\mathcal N+p_{1}$, we have 
				\begin{equation*}\label{control theta Diff-pre-1}
				\frac{\theta(2,\omega,k)-\theta(2,\omega,k-1)}{\theta(2,\omega,k-1)}\leq \varepsilon^3_{2}
				\end{equation*}
			\end{itemize}
			%

				Noticing that there exists $1\leq s\leq l(\sigma^{m+\mathcal{N}+p_{1}+\mathcal{N}}\omega)$ such that $$z_{\sigma^{m+\mathcal{N}+p_{1}+\mathcal{N}}\omega}\in X_{\sigma^{m+\mathcal{N}+p_{1}+\mathcal{N}}\omega}^{s}\subset X_{\sigma^{m+\mathcal{N}+p_{1}+\mathcal{N}}\omega}.$$
			Since $\sigma^{m+\mathcal N+p_{1}-1}\omega\in \Omega_{1}$, we know that there exists $w\ast v(l)\ast s\in\Sigma_{\omega,m+\mathcal{N}+p_{1}+\mathcal{N}+1}$ such that $y=:g_{\omega}^{w\ast v(l)\ast}(z_{\sigma^{m+\mathcal{N}+p_{1}+\mathcal{N}}\omega})\in X_{\omega}^{w\ast v(l)\ast s}\subset U_{\omega}^{w\ast v(l)\ast s}$. 
			
			Now we can choose the smallest $p$ and $v'(l)\in \Sigma_{\sigma^{m+\mathcal{N}}\omega,\theta(2,\omega,p)-m-\mathcal{N}}$ such that
			\begin{itemize}
				\item $v(l)\ast s$ is the prefix of $v'(l)$, which is equivalent to $U_{\omega}^{w\ast v'(l)}\subset U_{\omega}^{w\ast v(l)\ast s},$
				\item $y\in U_{\omega}^{w\ast v'(l)}$
				\item  define $r_\omega^{w\ast v\ast}=\exp(S_{m+\mathcal{N}+p_{1}+\mathcal{N}}\phi(\omega,y)-(m+\mathcal{N}+p_{1}+\mathcal{N})\varepsilon_{1}^3)$, then 
				\begin{equation*}\label{Choice of v'-1}
				T_{\omega}^{w\ast v(l)\ast}(U_{\omega}^{w\ast v'(l)})\subset B(z_{\sigma^{m+\mathcal{N}+p_{1}+\mathcal{N}}\omega},r_\omega^{w\ast v\ast}).
				\end{equation*}
			\end{itemize}
			Then for any $x\in U_{\omega}^{w\ast v'(l)}\subset U_{\omega}^{w\ast v(l)\ast s}$, 
			\begin{equation*}\label{Condition Tnx to z-1}
			|T_{\omega}^{w\ast v(l)\ast}x-z_{\sigma^{m+\mathcal{N}+p_{1}+\mathcal{N}}\omega}|\leq \exp(S_{m+\mathcal{N}+p_{1}+\mathcal{N}}\phi(\omega,x)),
			\end{equation*}	
			and
			\begin{equation}\label{Control Uwv'-1}
			|U_{\omega}^{w\ast v'(l)}|\geq |U_{\omega}^{w\ast v(l)}|^{1+\alpha_{1}+2\varepsilon_{1}}, 
			\end{equation} 
			(see \eqref{Condition Tnx to z} and \eqref{Control Uwv'} in general situation) 
			
			Let $G(\omega,w)=\{U_{\omega}^{w\ast v'(l)}:B_l\in D^{w,p_{1}}\}$, $G_{\omega,1}=G(\omega,w)$ and define a set function $\eta_{\omega}^{1}$ as follows,
			\begin{equation*}\label{define m(1)}
			\eta_{\omega}^{1}(U_{\omega}^{w\ast v'(l)})=\frac{\zeta_{\omega,w}(B_l)}{\sum_{B\in G(w)}\zeta_{\omega,w}(B)}.
			\end{equation*}
			From \eqref{Control zeta Bxr-1} and \eqref{Control Uwv'-1}, we will get (see \eqref{Control eta Uwv'} in general situation): 
			$$\eta_{\omega}^{1}(U_{\omega}^{w\ast v'(l)})\leq |U_{\omega}^{w\ast v'(l)}|^{q_{1}(1-4\varepsilon_{1})-\varepsilon_{1}^2}$$

			\item[Step 2] 
			Suppose that $G_{\omega,i}$ is well defined and so is the set function $\eta_{\omega}^{i}$ on it.
			For any $w$ such that $U^w_\omega\in G_{\omega,i}$, set $m=|w|$, then $\sigma^{m}\omega\in\Omega_{i+1}$ and the following hold
			\begin{itemize}
				\item $\mathcal N+\mathcal{N}_{i+1}\leq m\varepsilon^3_{i+1}$,
				\item for all $k\geq m$,
				\begin{equation}\label{Control epsilon-Psi}
				\max\{\epsilon(\Psi,\omega,k),\epsilon(\Phi,\omega,k)\}\leq \varepsilon^3_{i+1},
				\end{equation}
			\end{itemize} 
			Now, we define    
			\begin{equation*}\label{E_{w}}
			E(\omega,w)=\left\{\pi_\omega(w\ast \vl)\in U_{\omega}^{w}:\vl\in E_{i+1,\alpha_{i+1}}(\sigma^{m+\mathcal{N}}\omega,\mathcal{N}_{i+1},{\varepsilon_{i+1}})
			\right\}
			\end{equation*}
			and noticing $\widetilde\mu^{i+1}_{\sigma^{m+\mathcal N}\omega}(E_{i+1,\alpha_{i+1}}(\sigma^{m+\mathcal{N}}\omega,\mathcal{N}_{i+1},{\varepsilon_{i+1}})>1/2$ in proposition~\ref{initial}, we can define
			\begin{equation*}\label{def tzeta}
			\widetilde\zeta_{\omega,w}([w\ast v]_{\omega})=\frac{\widetilde\mu^{i+1}_{\sigma^{m+\mathcal N}\omega}([v]_{\sigma^{m+\mathcal N}\omega}\cap E_{i+1,\alpha_{i+1}}(\sigma^{m+\mathcal{N}}\omega,\mathcal{N}_{i+1},{\varepsilon_{i+1}})}{\widetilde\mu^{i+1}_{\sigma^{m+\mathcal N}\omega}(E_{i,\alpha_{i+1}}(\sigma^{m+\mathcal{N}}\omega,\mathcal{N}_{i+1},{\varepsilon_{i+1}})},
			\end{equation*}
			and 
			\begin{equation*}\label{def zeta}
			\zeta_{\omega,w}={\pi_\omega}_*\widetilde\zeta_{\omega,w}=\widetilde\zeta_{\omega,w}\circ{\pi_\omega}^{-1}.
			\end{equation*} 	 	
			We will easily obtain that $\zeta_{\omega,w}(E(\omega,w))=1$.
			
			For any $n\geq \mathcal{N}_{i+1}$, define
			$$\mathcal{F}_{w,n}=\{B(x,2|U_{\omega}^{w\ast v}|): x\in E(\omega,w)\cap U_{\omega}^{w\ast v}, v\in\Sigma_{\sigma^{m+\mathcal N},n}\}.$$
			From Besicovitch covering theorem $Q(d)$ families of disjoint balls namely
			$$\mathcal{F}^1_{w,n},\cdots,\mathcal{F}^{Q(d)}_{w,n}$$ can be extracted from $\mathcal{F}_{w,n}$, so that 
			$$
			E(\omega,w)\subset \bigcup_{j=1}^{Q(d)} \bigcup_{B\in \mathcal{F}^j_{w,n}}B.
			$$ Since $\zeta_{\omega,w}(E(\omega,w))=1$, there exists $j$ such that
			\begin{equation*}\label{}
			\zeta_{\omega,w}\Big(\bigcup_{B\in{\mathcal{F}^j_{w,n}}}B\Big)\geq \frac{1}{Q(d)}.
			\end{equation*}
			
			Again, we extract from $\mathcal{F}^j_{w,n}$ a finite family of pairwise disjoint balls $D(w,n)=\{B_1,\cdots,B_{j'}\}$ such that
			\begin{equation}\label{Control zeta all ball}
			\zeta_{\omega,w}\Big(\bigcup_{B_l\in D(w,n)}B_l\Big)\geq \frac{1}{2Q(d)}.
			\end{equation}
			For each $B_l\in D(w,n)$, there exists $y_l\in U_{\omega}^{w\ast v(l)}\cap E(w)$ such that $$B_l=B\big(y_l,2|U_{\omega}^{w\ast v(l)}|\big)\supset U_{\omega}^{w\ast v(l)}.$$

			Now we turn to prove the following lemma.
			\begin{lemma}\label{lemma: Control zeta Bxr}
				For any $x\in E(w)$, for $r\leq |U_{\omega}^w| \exp(-(C+4)m\varepsilon_{i+1}^3)$,
				one has that 
				$$\zeta_{\omega,w}(B(x,r))\leq (2r^{-\varepsilon_{i+1}^3})^d (\frac{r}{|U_{\omega}^{w}|})^{q_{i+1}(1+\alpha_{i+1}-2\varepsilon_{i+1})}.$$
			\end{lemma}
			\begin{proof}
				Define
				\begin{equation*}\label{Delta_{w,r}}
				\Delta_{w,r}=\left\{U_{\omega}^{w\ast v}:\begin{array}{l}
				[v]_{\sigma^{m+\mathcal{N}}\omega}\cap E_{i,\alpha_{i+1}}(\sigma^{m+\mathcal{N}}\omega,\mathcal{N}_i,{\varepsilon_i})\neq \emptyset\\
				|U_{\omega}^{w\ast v}|< r, |U_{\omega}^{w\ast v^\star}|\geq r
				\end{array}
				\right\},
				\end{equation*}
				recall the definition of $\star$ in subsection~\ref{subsection: Random subshift}.
				Let $k_1=\inf\{|v|,U_{\omega}^{w\ast v}\in \Delta_{w,r}\}$ and $k_2=\sup\{|v|,U_{\omega}^{w\ast v}\in \Delta_{w,r}\}$.
				Noticing the following fact:
				\begin{itemize}
					\item $k_1\ge \mathcal{N}_{i+1}$,
					in fact since $|w\ast v|=m+\mathcal{N}+n$ and $|U_{\omega}^{w\ast v}|< r\leq |U_{\omega}^w| \exp(-(C+4)m\varepsilon_{i+1}^3)$, we will get 
					$$\exp(S_{\mathcal{N}+n}(F^{m}(\omega,\vl))-2(m+\mathcal{N}+n)\epsilon(\Psi,\omega,m+\mathcal{N}+n))< \exp(-(C+4)m\varepsilon_{i+1}^3).$$
					From \eqref{Control upsilon upper} and \eqref{Control epsilon-Psi}, we have 
					$$C(n+\mathcal{N})+2(m+n+\mathcal{N})\varepsilon_{i+1}^3\geq (C+4)m\varepsilon_{i+1}^3,$$
					which yield 
					$n+\mathcal{N}\geq m\varepsilon_{i+1}^3,$
					and then $n\geq \mathcal{N}_{i+1}$ by $\mathcal N+\mathcal{N}_{i+1}\leq m\varepsilon^3_{i+1}$.
					\item from \eqref{var Psiphi-i}, \eqref{Control averagepsi upsilon} and \eqref{Control averagephi upsilon}, we have that for any $\vl\in [w\ast v]_{\omega}$ with $U_{\omega}^{w\ast v}\in \Delta_{w,r}$,
					\begin{equation}\label{Control Diff Phi 0-i}
					|S_{|v|}\Phi(\omega,\vl)-S_{|v|}\Phi_{i+1}(\omega,\vl)|\leq 2|v|\varepsilon_{i+1}^3,
					\end{equation}
					
					\begin{equation}\label{Control Diff Psi 0-i}
					|S_{|v|}\Psi(\omega,\vl)-S_{|v|}\Psi_{i+1}(\omega,\vl)|\leq 2|v|\varepsilon_{i+1}^3
					\end{equation}
					and
					\begin{equation}\label{Control average i}
					\left|\frac{S_{|v|}\Phi_i(\omega,\vl)}{S_{|v|}\Psi_i(\omega,\vl)}-\alpha_{i+1}\right|\leq \varepsilon_{i+1}
					\end{equation}
					
					\item for any $U_{\omega}^{w\ast v}\in \Delta_{w,r}$, using \eqref{Control Diff Phi 0-i},\eqref{Control Diff Psi 0-i} and \eqref{Control average i} and the definition of $\widetilde\zeta_{\omega,w}$ and $\zeta_{\omega,w}$, we can get 
					\begin{equation}\label{control mass u-wv}
					\left|\frac{\log \widetilde\zeta_{\omega,w}([w\ast v]_{\omega})}{\log |U_{\omega}^{w\ast v}|-\log |U_{\omega}^{w}|}-q_{i+1}(1+\alpha_{i+1})\right|\leq 2q_{i+1}\varepsilon_{i+1} 
					\end{equation}
					\item for any $U_{\omega}^{w\ast v}\in \Delta_{w,r}$ with $U_{\omega}^{w\ast v}\cap B(x,r)\neq \emptyset$, we will get $U_{\omega}^{w\ast v}\subset B(x,2r)$. Noticing $|U_{\omega}^{w\ast v^\star}|\geq r$ then $U_{\omega}^{w\ast v}$ contains a ball $B_{w\ast v}$ with radius $r^{1+\varepsilon_{i+1}^3}$. Since $\{B_{w\ast v}\}_{U_{\omega}^{w\ast v}\in \Delta_{w,r}}$ have no inner intersection. So there are at most $(2r^{-\varepsilon_{i+1}^3})^d$ of $U_{\omega}^{w\ast v}\in \Delta_{w,r}$ can intersect with $B(x,r)$.
				\end{itemize} 
				So
				\begin{eqnarray*}
					&&\zeta_{\omega,w}(B(x,r)\leq \widetilde\zeta_{\omega,w}(\pi_\omega^{-1}(B(x,r)))\\
					&\leq&\sum_{U_{\omega}^{w\ast v}\in \Delta_{w,r},U_{\omega}^{w\ast v}\cap B(x,r)\neq \emptyset}\widetilde\zeta_{\omega,w}([w\ast v])\\
					&\leq&(2r^{-\varepsilon_{i+1}^3})^d\sup_{U_{\omega}^{w\ast v}\in \Delta_{w,r}}\widetilde\zeta_{\omega,w}([w\ast v])\\
					&\leq&(2r^{-\varepsilon_{i+1}^3})^d\sup_{U_{\omega}^{w\ast v}\in \Delta_{w,r}}\left(\frac{U_{\omega}^{w\ast v}}{U_{\omega}^{w}}\right)^{q_{i+1}(1+\alpha_{i+1}-2\varepsilon_{i+1})} \text{(see \eqref{control mass u-wv})}\\
					&\leq&(2r^{-\varepsilon_{i+1}^3})^d (\frac{r}{|U_{\omega}^{w}|})^{q_{i+1}(1+\alpha_{i+1}-2\varepsilon_{i+1})}
				\end{eqnarray*}
			\end{proof}

			Now choose $p_{i+1}\geq \mathcal N_{i+1}$ large such that 
			\begin{itemize}
				\item $\sigma^{m+\mathcal{N}+p_{i+1}}\omega\in\Omega_{i+1}$
				\item for all $k\geq m+\mathcal{N}+p_{i+1}$,
				\begin{equation*}
				\max\{\epsilon(\Psi,\omega,k),\epsilon(\Phi,\omega,k)\}\leq \varepsilon^3_{i+2},
				\end{equation*}
				\item 
				\begin{equation}\label{pi+1 large 1}
				p_{i+1}\geq \frac{(m+2\mathcal{N})C+Y(\omega)}{c\varepsilon_{i+1}^3/2},
				\end{equation}
				and
				\begin{equation}\label{pi+1 large 2}
				p_{i+1}\geq \frac{(q_{i+1}(1+\alpha_{i+1}-2\varepsilon_{i+1})+d+1)\log 2+\log (Q(d))}{c\varepsilon_{i+1}^3/2},
				\end{equation}
				\item $\mathcal N+\mathcal{N}_{i+2}\leq (m+\mathcal N+p_{i+1})\varepsilon^3_{i+2}$, that is $m+\mathcal N+p_{i+1}\geq    \frac{\mathcal N+\mathcal{N}_{i+2}}{\varepsilon^3_{i+2}}$, 
				\item for any $k\in\mathbb{N}$ such that $\theta(i+2,\omega,k)\geq m+\mathcal N+p_{i+1}$, we have 
				\begin{equation}\label{control theta Diff-pre}
				\frac{\theta(i+2,\omega,k)-\theta(i+2,\omega,k-1)}{\theta(i+2,\omega,k-1)}\leq \varepsilon^3_{i+2}
				\end{equation}
			\end{itemize}
			Noticing that there exists $1\leq s\leq l(\sigma^{m+\mathcal{N}+p_{i+1}+\mathcal{N}}\omega)$ such that $$z_{\sigma^{m+\mathcal{N}+p_{i+1}+\mathcal{N}}\omega}\in X_{\sigma^{m+\mathcal{N}+p_{i+1}+\mathcal{N}}\omega}^{s}\subset X_{\sigma^{m+\mathcal{N}+p_{i+1}+\mathcal{N}}\omega}.$$
			Since $\sigma^{m+\mathcal N+p_{i+1}-1}\omega\in \Omega_{i+1}$, we know that there exists $$w\ast v(l)\ast s\in\Sigma_{\omega,m+\mathcal{N}+p_{i+1}+\mathcal{N}+1}$$ such that $$y=:g_{\omega}^{w\ast v(l)\ast}(z_{\sigma^{m+\mathcal{N}+p_{i+1}+\mathcal{N}}\omega})\in X_{\omega}^{w\ast v(l)\ast s}\subset U_{\omega}^{w\ast v(l)\ast s}.$$
			
			Now we can choose the smallest $p$ and $v'(l)\in \Sigma_{\sigma^{m+\mathcal{N}}\omega,\theta(i+2,\omega,p)-m-\mathcal{N}}$ such that
			\begin{itemize}
				\item $v(l)\ast s$ is the prefix of $v'(l)$, which is equivalent to $U_{\omega}^{w\ast v'(l)}\subset U_{\omega}^{w\ast v(l)\ast s},$
				\item $y\in U_{\omega}^{w\ast v'(l)}$
				\item  define $r_\omega^{w\ast v\ast}=\exp(S_{m+\mathcal{N}+p_{i+1}+\mathcal{N}}\phi(\omega,y)-(m+\mathcal{N}+p_{i+1}+\mathcal{N})\varepsilon_{i+1}^3)$, then 
				\begin{equation}\label{Choice of v'}
				T_{\omega}^{w\ast v(l)\ast}(U_{\omega}^{w\ast v'(l)})\subset B(z_{\sigma^{m+\mathcal{N}+p_{i+1}+\mathcal{N}}\omega},r_\omega^{w\ast v\ast}).
				\end{equation}
			\end{itemize}
			We claim that: for any $x\in U_{\omega}^{w\ast v'(l)}\subset U_{\omega}^{w\ast v(l)\ast s}$, 
			\begin{equation}\label{Condition Tnx to z}
			|T_{\omega}^{w\ast v(l)\ast}x-z_{\sigma^{m+\mathcal{N}+p_{i+1}+\mathcal{N}}\omega}|\leq \exp(S_{m+\mathcal{N}+p_{i+1}+\mathcal{N}}\phi(\omega,x)).
			\end{equation}
			
			In fact, \eqref{Choice of v'} implies $|T_{\omega}^{w\ast v(l)\ast}x-z_{\sigma^{m+\mathcal{N}+p_{i+1}+\mathcal{N}}\omega}|\leq r_\omega^{w\ast v\ast}$, then noticing $x,y\in U_{\omega}^{w\ast v(l)\ast s}$, \eqref{Control V-n-psi} and \eqref{Control epsilon-Psi}, we have $$ S_{m+\mathcal{N}+p_{i+1}+\mathcal{N}}\phi(\omega,y)-(m+\mathcal{N}+p_{i+1}+\mathcal{N})\varepsilon_{i+1}^3\leq S_{m+\mathcal{N}+p_{i+1}+\mathcal{N}}\phi(\omega,x),$$
			from the definition of $r_\omega^{w\ast v\ast}$, the claim holds.
			
			\begin{lemma}\label{lemma:Control Uwv'}
				\begin{equation}\label{Control Uwv'}
				|U_{\omega}^{w\ast v'(l)}|\geq |U_{\omega}^{w\ast v(l)}|^{1+\alpha_{i+1}+2\varepsilon_{i+1}} 
				\end{equation} 
			\end{lemma}
		 Although the proof of the lemma is a bit complex, but the main idea is very simple. First, since we have choose the smallest $p$, and $\theta(i+2,\omega,p)-\theta(i+2,\omega,p-1)$ is smaller than $\theta(i+2,\omega,p)\varepsilon_{i+1}^3$. Second, use \eqref{Control Diff Phi 0-i},\eqref{Control Diff Psi 0-i} and \eqref{Control average i}. We can skip the details of the proof at first sight.
			\begin{proof}
				From the choice of $p$ and $v'(l)$, define $v''(l)=v'(l)|_{\theta(i+2,\omega,p-1)-m-\mathcal{N}}$.
				We claim that 
				\begin{equation}\label{Control length v&v'}
				|U_{\omega}^{w\ast v'(l)}|\geq |U_{\omega}^{w\ast v''(l)}|^{1+\varepsilon_{i+1}^2}
				\end{equation}
				In fact, for any $\vl\in [w\ast v'(l)]_{\omega}$ 
				\begin{align*}
				|U_{\omega}^{w\ast v'(l)}|&\geq\exp(S_{\theta(i+2,\omega,p)}(\omega,\vl)-\theta(i+2,\omega,p)\epsilon(\Psi,\omega,\theta(i+2,\omega,p)))\\
				&\geq\exp(S_{\theta(i+2,\omega,p-1)}(\omega,\vl)-(\theta(i+2,\omega,p)-\theta(i+2,\omega,p-1))C)\\
				&\quad\cdot\exp(-\theta(i+2,\omega,p)\varepsilon_{i+1}^3)\\
				&\geq\exp(S_{\theta(i+2,\omega,p-1)}(\omega,\vl)-C\theta(i+2,\omega,p-1)\varepsilon_{i+1}^3)\\
				&\cdot\exp(-\theta(i+2,\omega,p-1)(1+\varepsilon_{i+1}^3)\varepsilon_{i+1}^3)
				\quad(\text{see \eqref{control theta Diff-pre}})\\
				&\geq|U_{\omega}^{w\ast v''(l)}|\exp(-\theta(i+2,\omega,p)\epsilon(\Psi,\omega,\theta(i+2,\omega,p))\\
				&\quad \exp(-(C\theta(i+2,\omega,p-1)+\theta(i+2,\omega,p-1)(1+\varepsilon_{i+1}^3))\varepsilon_{i+1}^3)\\
				&\geq |U_{\omega}^{w\ast v''(l)}|\exp(-(C+3)\theta(i+2,\omega,p-1)\varepsilon_{i+1}^3)\\
				&\text{ since } \epsilon(\Psi,\omega,\theta(i+2,\omega,p)<\varepsilon_{i+1}^3 \text{ and }  \varepsilon_{i+1}^3<1\\
				&\geq  |U_{\omega}^{w\ast v''(l)}|^{1+\frac{(C+3)\varepsilon_{i+1}^3}{c/2}}\geq |U_{\omega}^{w\ast v''(l)}|^{1+\varepsilon_{i+1}^2}\ \text{see \eqref{Control U--upper--lower}.}
				\end{align*}
				
				Now there will be {\bf two cases}:
				
				{\bf case 1.} $U_{\omega}^{w\ast v''(l)}\not\subset U_{\omega}^{w\ast v(l)\ast s}$. Since $w\ast v''(l)$ and $w\ast v$ are the prefixes of $w\ast v'(l)$, we have $U_{\omega}^{w\ast v''(l)}\supset U_{\omega}^{w\ast v(l)\ast s}$, using a similar method as the proof of \eqref{Control length v&v'}, we can get 
				$$|U_{\omega}^{w\ast v(l)\ast s}|\geq |U_{\omega}^{w\ast v(l)}|^{1+\varepsilon_{i+1}^2},$$
				now \eqref{Control length v&v'} can imply 
				$$|U_{\omega}^{w\ast v'(l)}|\geq |U_{\omega}^{w\ast v''(l)}|^{1+\varepsilon_{i+1}^2}\geq |U_{\omega}^{w\ast v(l)}|^{({1+\varepsilon_{i+1}^2})^2}\geq   |U_{\omega}^{w\ast v(l)}|^{1+\alpha_{i+1}+2\varepsilon_{i+1}}.$$

				{\bf Case 2.}  $U_{\omega}^{w\ast v''(l)}\subset U_{\omega}^{w\ast v(l)\ast s}$. Since we have chosen the smallest $p$, so from \eqref{Choice of v'}, there exist $x\in U_{\omega}^{w\ast v''(l)}$ such that $$|T_{\omega}^{w\ast v(l)\ast}x-z_{\sigma^{m+\mathcal{N}+p_{i+1}+\mathcal{N}}\omega}|> r_{\omega}^{w\ast v(l)\ast}.$$
				From \eqref{distortion}, noticing $y=g_{\omega}^{w\ast v(l)\ast}(z_{\sigma^{m+\mathcal{N}+p_{i+1}+\mathcal{N}}\omega})$, $x=g_{\omega}^{w\ast v(l)\ast}(T_{\omega}^{w\ast v(l)\ast}x)$,
				for any $\vl\in [w\ast v(l)\ast]_{\omega}$ and \eqref{Control epsilon-Psi}, we will get 
				\begin{equation*}\label{control d(x,y)}
				|U_{\omega}^{w\ast v''(l)}|\geq\|x-y\|\geq \exp(S_{m+p_{i+1}+2\mathcal{N}}(\Psi+\Phi)(\omega,\vl)-4(m+p_{i+1}+2\mathcal{N})\varepsilon_{i+1}^3),
				\end{equation*}
				so that we can  claim that
				\begin{equation*}\label{control U-wv''}
				|U_{\omega}^{w\ast v''(l)}|\geq\exp(S_{p_{i+1}}(\Psi+\Phi)F^{m+\mathcal{N}}(\omega,\vl))-5(m+p_{i+1}+2\mathcal{N})\varepsilon_{i+1}^3).
				\end{equation*}
				
				In fact we just need to notice the following claim: for $\Upsilon\in\{\Psi,\Psi+\Phi\}$ we have 
				\begin{equation}\label{Diff m+p to p}
				|S_{m+p_{i+1}+2\mathcal{N}}\Upsilon(\omega,\vl)-S_{p_{i+1}}\Upsilon(F^{m+\mathcal{N}}(\omega,\vl))|\leq p_{i+1}\varepsilon_{i+1}^3,
				\end{equation}
				It is just from \eqref{Control upsilon upper} and \eqref{pi+1 large 1}, then
				\begin{align*}
				&|S_{m+p_{i+1}+2\mathcal{N}}\Upsilon(\omega,\vl)-S_{p_{i+1}}\Upsilon(F^{m+2\mathcal{N}}(\omega,\vl))|\\
				=&|S_{\theta(1,\omega,1)}\Upsilon(\omega,\vl)+S_{m+\mathcal{N}-\theta(1,\omega,1)}\Upsilon(F^{\theta(1,\omega,1)}(\omega,\vl))+S_{\mathcal{N}}\Upsilon(F^{m+p_{i+1}+\mathcal{N}}(\omega,\vl))|\\
				\leq& S_{\theta(1,\omega,1)}\|\Upsilon(\omega)\|_{\infty}+(m+2\mathcal{N}C)\ \text{see \eqref{Control upsilon upper}}\\
				\leq&p_{i+1}\varepsilon_{i+1}^3 \ \text{see \eqref{pi+1 large 1}}.
				\end{align*}
				%
				Now
				\begin{align*}
				|U_{\omega}^{w\ast v''(l)}|
				&\geq \exp((S_{p_{i+1}}(\Psi+\Phi)(F^{m+\mathcal{N}}(\omega,\vl))-5(m+p_{i+1}+2\mathcal{N})\varepsilon_{i+1}^3)\\
				&\geq\exp(-10p_{i+1}\varepsilon_{i+1}^3+(S_{p_{i+1}}(\Psi+\Phi)(F^{m+\mathcal{N}}(\omega,\vl))))\qquad(\text{since } p_{i+1}\geq m+2\mathcal{N})\\
				&\geq \exp(-10p_{i+1}\varepsilon_{i+1}^3+(S_{p_{i+1}}(\Psi_{i+1}+\Phi_{i+1})(F^{m+\mathcal{N}}(\omega,\vl))-4p_{i+1}\varepsilon_{i+1}^3))\\
				&\text{ see \eqref{Control Diff Phi 0-i} and \eqref{Control Diff Psi 0-i}}\\
				&\geq \exp(-14p_{i+1}\varepsilon_{i+1}^3+(1+\alpha_{i+1}+\varepsilon_{i+1})(S_{p_{i+1}}(\Psi_{i+1})(F^{m+\mathcal{N}}(\omega,\vl))))\\
				&\text{see \eqref{Control average i}}\\
				&\geq \exp(-14p_{i+1}\varepsilon_{i+1}^3+(1+\alpha_{i+1}+\varepsilon_{i+1})(S_{p_{i+1}}\Psi(F^{m+\mathcal{N}}(\omega,\vl))-p_{i+1}\varepsilon_{i+1}^3)\\
				&\text{see \eqref{Control Diff Psi 0-i}}\\
				&\geq \exp((1+\alpha_{i+1}+\varepsilon_{i+1})(S_{m+\mathcal{N}+p_{i+1}}\Psi(\omega,\vl)-2p_{i+1}\varepsilon_{i+1}^3)-14p_{i+1}\varepsilon_{i+1}^3)\\
				&\text{(see \eqref{Diff m+p to p}, but change $2\mathcal{N}$ to $\mathcal{N}$)}\\
				&\geq\exp((1+\alpha_{i+1}+\varepsilon_{i+1})(S_{m+\mathcal{N}+p_{i+1}}(\Psi)(\omega,\vl))-(16+2(\alpha_{i+1}+\varepsilon_{i+1}))p_{i+1}\varepsilon_{i+1}^3).
				\end{align*}
				Noticing item 1 of proposition~\ref{Length and measure} and \eqref{Control epsilon-Psi}, we have 
				$$|U_{\omega}^{w\ast v(l)}|\leq\exp(S_{m+\mathcal{N}+p_{i+1}}(\Psi)(\omega,\vl)+(m+\mathcal{N}+p_{i+1})\varepsilon_{i+1}^3)\leq \exp(-cp_{i+1}/2).$$
				Then
				\begin{align*}
				|U_{\omega}^{w\ast v''(l)}|
				\geq& |U_{\omega}^{w\ast v(l)}|^{(1+\alpha_{i+1}+\varepsilon_{i+1})}\exp(-(1+\alpha_{i+1}+\varepsilon_{i+1})(m+\mathcal{N}+p_{i+1})\varepsilon_{i+1}^3)\\
				&\cdot\exp(-(16+2(\alpha_{i+1}+\varepsilon_{i+1}))p_{i+1}\varepsilon_{i+1}^3)  \\
				\geq&|U_{\omega}^{w\ast v(l)}|^{(1+\alpha_{i+1}+\varepsilon_{i+1})}\exp(-(18+4(\alpha_{i+1}+\varepsilon_{i+1}))p_{i+1}\varepsilon_{i+1}^3)\\
				\geq&|U_{\omega}^{w\ast v(l)}|^{(1+\alpha_{i+1}+\varepsilon_{i+1})+2(18+4(\alpha_{i+1}+\varepsilon_{i+1}))\varepsilon_{i+1}^3)/c}\ \text{see \eqref{Control U--upper--lower}}\\
				\geq& |U_{\omega}^{w\ast v(l)}|^{1+\alpha_{i+1}+\varepsilon_{i+1}+\varepsilon_{i+1}^2}\\
				&\text{see \eqref{Control varepsilon 1}.}\\
				\end{align*}
				That is 
				\begin{equation*}\label{Control Uwv''}
				|U_{\omega}^{w\ast v''(l)}|\geq |U_{\omega}^{w\ast v(l)}|^{1+\alpha_{i+1}+\varepsilon_{i+1}+\varepsilon_{i+1}^2}, 
				\end{equation*}
				also by \eqref{Control length v&v'}, we have 
				$$|U_{\omega}^{w\ast v'(l)}|\geq |U_{\omega}^{w\ast v(l)}|^{(1+\varepsilon_{i+1}^2)(1+\alpha_{i+1}+\varepsilon_{i+1}+\varepsilon_{i+1}^2)}\geq |U_{\omega}^{w\ast v(l)}|^{1+\alpha_{i+1}+2\varepsilon_{i+1}}.$$
			\end{proof}

			Let $G(\omega,w)=\{U_{\omega}^{w\ast v'(l)}:B_l\in D(w,p_{i+1})\}$, $G_{\omega,i+1}=\bigcup_{U_{\omega}^{w}\subset G_{\omega,i}}G(\omega,w)$ and define $\eta_{\omega}^{i+1}$ as follows,
			\begin{equation*}\label{define m(i+1)}
			\eta_{\omega}^{i+1}(U_{\omega}^{w\ast v'(l)})=\frac{\zeta_{\omega,w}(B_l)}{\sum_{B\in G(w)}\zeta_{\omega,w}(B)}\left(\eta_{\omega}^{i}(U_{\omega}^w)\right).
			\end{equation*}
			Now we turn to estimate $\eta_{\omega}^{i+1}(U_{\omega}^{w\ast v'(l)})$.

			Lemma~\ref{lemma: Control zeta Bxr} and \eqref{Control zeta all ball} tells us that 
			\begin{equation*}
			\eta_{\omega}^{i+1}(U_{\omega}^{w\ast v'(l)})\leq 2Q(d)\eta_{\omega}^{i}(U_{\omega}^{w})(4|U_{\omega}^{w\ast v(l)}|^{-\varepsilon_{i+1}^3})^d (\frac{2|U_{\omega}^{w\ast v(l)}|}{|U_{\omega}^w|})^{q_{i+1}(1+\alpha_{i+1}-2\varepsilon_{i+1})},
			\end{equation*}   
			then using lemma~\ref{lemma:Control Uwv'},
			\begin{align*}
			&\eta_{\omega}^{i+1}(U_{\omega}^{w\ast v'(l)})\\
			\leq &2Q(d)\eta_{\omega}^{i}(U_{\omega}^{w})(2(2|U_{\omega}^{w\ast v(l)}|)^{-\varepsilon_{i+1}^3})^d (\frac{2|U_{\omega}^{w\ast v(l)}|}{|U_{\omega}^w|})^{q_{i+1}(1+\alpha_{i+1}-2\varepsilon_{i+1})}\\
			\leq&2^{q_{i+1}(1+\alpha_{i+1}-2\varepsilon_{i+1})+d+1}Q(d) |U_{\omega}^{w\ast v(l)}|^{q_{i+1}(1+\alpha_{i+1}-2\varepsilon_{i+1})-d\varepsilon_{i+1}^3} |U_{\omega}^w|^{-q_{i+1}(1+\alpha_{i+1}-2\varepsilon_{i+1})}.
			\end{align*}
			Noticing \eqref{pi+1 large 1} and \eqref{pi+1 large 2}, we have
			\begin{eqnarray*}
				&\frac{\log (2^{q_{i+1}(1+\alpha_{i+1}-2\varepsilon_{i+1})+d+1}Q(d))}{\log |U_{\omega}^{w\ast v(l)}|}\leq \varepsilon_{i+1}^3,\\
				&\frac{\log |U_{\omega}^w|}{\log |U_{\omega}^{w\ast v(l)}|}\leq \frac{Y(\omega)+C(m+\mathcal{N})}{cp_{i+1}/2}\leq \varepsilon_{i+1}^3,
			\end{eqnarray*}
			then
			\begin{align*}
			&\eta_{\omega}^{i+1}(U_{\omega}^{w\ast v'(l)})\\
			\leq&|U_{\omega}^{w\ast v(l)}|^{q_{i+1}(1+\alpha_{i+1}-2\varepsilon_{i+1})-d\varepsilon_{i+1}^3-q_{i+1}(1+\alpha_{i+1}-2\varepsilon_{i+1})\varepsilon_{i+1}^3-\varepsilon_{i+1}^3}\\
			\leq& |U_{\omega}^{w\ast v(l)}|^{q_{i+1}(1+\alpha_{i+1}-2\varepsilon_{i+1})-(q_{i+1}(1+\alpha_{i+1}-2\varepsilon_{i+1})-d-1)\varepsilon_{i+1}^3)}\\
			\leq&|U_{\omega}^{w\ast v'(l)}|^{\frac{q_{i+1}(1+\alpha_{i+1}-2\varepsilon_{i+1})-\varepsilon_{i+1}^2)}{1+\alpha_{i+1}+2\varepsilon_{i+1}}}\text{ see \eqref{Control Uwv'} in lemma~\ref{lemma:Control Uwv'} and\eqref{Control varepsilon 2}}\\
			\leq&|U_{\omega}^{w\ast v'(l)}|^{q_{i+1}(1-4\varepsilon_{i+1})-\varepsilon_{i+1}^2}. 
			\end{align*} 
			
			So that for any $U_{\omega}^{w\ast v'(l)}\in G_{\omega,i+1}$, we have 
			\begin{equation}\label{Control eta Uwv'}
			\eta_{\omega}^{i+1}(U_{\omega}^{w\ast v'(l)})\leq |U_{\omega}^{w\ast v'(l)}|^{q_{i+1}(1-4\varepsilon_{i+1})-\varepsilon_{i+1}^2}.
			\end{equation}
			\item[step 3] Now define $K_{\omega}=\cap_{i\in\mathbb{N}}\cup_{j\geq i}\cup_{U_{\omega}^{w}\in G_{\omega,j}}U_{\omega}^{w}=\cap_{i\in\mathbb{N}}\cup_{U_{\omega}^{w}\in G_{\omega,i}}U_{\omega}^{w}$. From the construction we can easily get $K_{\omega}\subset  W(\phi,\omega)$.  Noticing the definition of $\{\eta_{\omega}^{i}\}_{i\in\mathbb{N}}$, we can distribute a probability measure $\eta_{\omega}$ on the algebra generated by $\cup_{i\in\mathbb{N}}G_{\omega,i}$ such that for any $U_{\omega}^{w}\in G_{\omega,i}$, $\eta_{\omega}(U_{\omega}^{w})=\eta_{\omega}^{i}(U_{\omega}^{w})$
			
			We have the following properties:

			\begin{enumerate}[(i)]
				\item\label{i}
				if $U_{\omega}^{w\ast v'(l_1)}$ and $U_{\omega}^{w\ast v'(l_2)}$ are different elements that belong to $G(\omega,w)\subset G_{\omega,i+1}$, $B_{l_1}\cap B_{l_2}=\emptyset$ recall that 
				$B_{l_j}=B\big(y_{l_j},2|U_{\omega}^{w\ast v(l_j)}|\big)$ and $y_{l_j}\in U_{\omega}^{w\ast v(l_j)}$ for $j=1,2$.	So their distance is at least $\max_{j\in\{1,2\}}|U_{\omega}^{w\ast v(l_j)}|$.	
				\item\label{ii} For any $U_{\omega}^{w\ast v'(l)}$ in $G(\omega,w)\subset G_{\omega,i+1}$, we denote the corresponding $y$ by $y_l$, then 
				$y_l\in U_{\omega}^{w\ast v(l)}\cap E(\omega,w)\subset B_{l}\cap E(\omega,w)\neq \emptyset$.
				\item\label{iii} For any $U_{\omega}^{w\ast v'(l)}$ in $G(\omega,w)\subset G_{\omega,i+1}$,
				\begin{equation}\label{F control}
				\eta_{\omega}(U_{\omega}^{w\ast v'(l)})\leq |U_{\omega}^{w\ast v'(l)}|^{q_{i+1}(1-4\varepsilon_{i+1})-\varepsilon_{i+1}^2}.
				\end{equation}
				\item\label{iv} Any $U_{\omega}^{w\ast v'(l)}$ in $G(\omega,w)\subset G_{\omega,i+1}$ is contained in  an element $U_{\omega}^w \in G_{\omega,i}$ such that
				\begin{equation}\label{eta upper in i}
				\eta_{\omega}(U_{\omega}^{w\ast v'(l)})\leq 2Q(d)\eta_{\omega}^{i}(U_{\omega}^{w})\zeta_{\omega,w}(B(y_l,2|U_{\omega}^{w\ast v}|)),
				\end{equation}
				where $q_i\in \mathcal{Q}_i$ is such that $\mathcal{T}_{j_i}'(q_i)=d_i$.
			\end{enumerate}
			Because of the separation property~(\ref{i}), the probability measure $\eta_{\omega}$ can be extends to the $\sigma$-algebra generated by $\cup_{i\in\mathbb{N}}G_{\omega,i}$
			and it is easy to notice that $\eta_{\omega}(K_{\omega})=1$. The measure $\eta_{\omega}$ can be also extended to $U$ by setting, for any $B\in \mathcal{B}(U)$, $\eta_{\omega}(B):=\eta_{\omega}(B\cap K_{\omega})$.
			\item[step 4] 
			Now let us estimate the lower Hausdorff dimension of $\eta_{\omega}$ and then get the lower bound for the Hausdorff dimension $K_{\omega}$ . If $q_0=0$, there is nothing to prove. So we assume that $q_0>0$.
			
			Let us fix a ball $B$ which is a subset of $U$ with length smaller than that of every element in $G_{\omega,1}$, and assume that $B(x,r)\cap K_{\omega}\neq \emptyset$. Let $U_{\omega}^w$ be the element of largest diameter in $\bigcup_{i\geq 1} G_{\omega,i}$ such that $B$ intersects at least two elements of $G_{\omega,i+1}$ and is included in $U_{\omega}^w\in G_{\omega,i}$. We remark that this implies that $B$ does not intersect any other element of $G_{\omega,i}$ and as a consequence $\eta_{\omega}(B)\leq \eta_{\omega}(U_{\omega}^w)$.
			
			\medskip
			
			Let us distinguish three cases:

			$\bullet$ $|B|\geq |U_{\omega}^w|$: from \eqref{F control} in property (\ref{iii}) we have 
			\begin{equation}\label{control B>L}
			\eta_{\omega}(B)\leq \eta_{\omega}(U_{\omega}^w)\leq |U_{\omega}^w|^{q_{i}(1-4\varepsilon_{i})-\varepsilon_{i}^2}\leq |B|^{q_{i}(1-4\varepsilon_{i})-\varepsilon_{i}^2}.
			\end{equation}
			
			$\bullet$  $|B|\leq \frac{1}{3}|U_{\omega}^w|\exp(-(C+4)|w|\varepsilon_{i+1}^3)$. 
			Assume $U_{\omega}^{w\ast v'(1)},\dots,U_{\omega}^{w\ast v'(k)}$ are the elements of $G(\omega,w)\subset G_{\omega,i+1}$ which have non-empty intersection with $B$.
			From \eqref{eta upper in i} in property (\ref{iv}), we get
			\begin{equation*}
			\eta_{\omega}(B)=\sum_{l=1}^k \eta_{\omega}(B\cap U_{\omega}^{w\ast v'(l)})\leq 2 Q(d) \eta_{\omega}((U_{\omega}^w))\sum_{l=1}^k \zeta_{\omega,w}(U_{\omega}^{w\ast v(l)}).
			\end{equation*}
			
			From property (\ref{i}) we can also deduce that $\max\{|U_{\omega}^{w\ast v(l)}|:1\leq l\leq k\}\leq |B|$. From property (\ref{ii}) we can get $y_l\in U_{\omega}^{w\ast v(l)}\cap E(w)\neq\emptyset$. If $y\in\{y_l,1\leq l\leq k\}$, we have $B(y,3|B|)\supset(\bigcup_{l=1}^k U_{\omega}^{w\ast v(l)}) $.
			
			Using lemma \ref{lemma: Control zeta Bxr}, we can get:  
			$$\zeta_{\omega,w}(B(y,3|B|))\leq (2\cdot(3|B|)^{-\varepsilon_{i+1}^3})^d (\frac{3|B|}{|U_{\omega}^{w}|})^{q_{i+1}(1+\alpha_{i+1}-2\varepsilon_{i+1})}.$$
			So that 
			\begin{align*}
			\eta_{\omega}(B)&\leq 2 Q(d) \eta_{\omega}((U_{\omega}^w))\sum_{l=1}^k \zeta_{\omega,w}(U_{\omega}^{w\ast v(l)})\\
			&\leq 2 Q(d) \eta_{\omega}((U_{\omega}^w)) \zeta_{\omega,w}(B(y,3|B|))\\
			&\leq 2 Q(d) |U_{\omega}^w|^{q_{i}(1-4\varepsilon_{i})-\varepsilon_{i}^2} (2\cdot(3|B|)^{-\varepsilon_{i+1}^3})^d (\frac{3|B|}{|U_{\omega}^{w}|})^{q_{i+1}(1+\alpha_{i+1}-2\varepsilon_{i+1})}\\
			&\leq 2 Q(d)(2\cdot(3|B|)^{-\varepsilon_{i+1}^3})^d |U_{\omega}^w|^{q_{i}(1-4\varepsilon_{i})-\varepsilon_{i}^2}\\ &\cdot(\frac{3|B|}{|U_{\omega}^{w}|})^{q_{i}(1-4\varepsilon_{i})-\varepsilon_{i}^2+(q_{i+1}(1+\alpha_{i+1}-2\varepsilon_{i+1})-q_{i}(1-4\varepsilon_{i})+\varepsilon_{i}^2)}\\
			&\leq 2\cdot 2^d Q(d)(3|B|)^{q_{i}(1-4\varepsilon_{i})-\varepsilon_{i}^2-d\varepsilon_{i+1}^3} (\frac{3|B|}{|U_{\omega}^{w}|})^{(q_{i+1}(1+\alpha_{i+1}-2\varepsilon_{i+1})-q_{i}(1-4\varepsilon_{i}))}.
			\end{align*}
			From $\liminf_{i\to\infty}\alpha_{i}>0$ and $\lim_{i\to\infty} q_i=q_0>0$
			we have $q_{i+1}(1+\alpha_{i+1}-2\varepsilon_{i+1})-q_{i}(1-4\varepsilon_{i})\geq 0$  for $i$ large enough. Further more since $3|B|/|U_{\omega}^{w}|\leq 1$, we have:
			\begin{equation}\label{control B<<L}
			\eta_{\omega}(B)\leq 2^{d+1} Q(d)(3|B|)^{q_{i}(1-4\varepsilon_{i})-\varepsilon_{i}^2-d\varepsilon_{i+1}^3}. 
			\end{equation}

			$\bullet$  $\frac{1}{4}|U_{\omega}^{w}|\exp(-(C+4)|w|(\varepsilon_{i+1})^3)\leq |B|\leq |U_{\omega}^{w}|$:
			we need at most $M(B)$ balls $(B(k))_{1\leq k \leq M(B)}$ with diameter $\frac{1}{4}|U_{\omega}^{w}|\exp(-(C+4)|w|(\varepsilon_{i+1})^3)$ that can cover $B$ with $M(B)\leq \Gamma(d)(4\exp((C+4)|w|(\varepsilon_{i+1})^3))^d$, where $\Gamma$ is a function just depends on $d$ since we are dealing with problems in $\mathbb{R}^d$. For these Balls we have the estimate above. Consequently,
			
			\begin{equation*}\label{}
			\begin{split}
			\eta_{\omega}(B) \leq&\sum_{k=1}^{M(B)}\eta_{\omega}(B(k))\\ 
			\leq&\sum_{k=1}^{M(B)}2^{d+1} Q(d)(3|B(k)|)^{q_{i}(1-4\varepsilon_{i})-\varepsilon_{i}^2-d\varepsilon_{i+1}^3}\\
			\leq& \Gamma(d)(4\exp((C+4)|w|(\varepsilon_{i+1})^3))^d \cdot 2^{d+1} Q(d)(3|B|)^{q_{i}(1-4\varepsilon_{i})-\varepsilon_{i}^2-d\varepsilon_{i+1}^3}\\
			\leq&2^{3d+1}\cdot Q(d)\cdot\Gamma(d)(3|B|)^{q_{i}(1-4\varepsilon_{i})-\varepsilon_{i}^2-d\varepsilon_{i+1}^3}|U_{\omega}^w|^{-\frac{(C+4)\varepsilon_{i}^3}{c/2}}\ \text{see \eqref{Control U--upper--lower}}\\
			\leq& 2^{3d+1}\cdot Q(d)\cdot\Gamma(d)(3|B|)^{q_{i}(1-4\varepsilon_{i})-\varepsilon_{i}^2-d\varepsilon_{i+1}^3}|B|^{-\frac{(C+4)\varepsilon_{i}^3}{c/2}}
			\end{split}
			\end{equation*}
			Finally,			\begin{equation}\label{control B<L}
			\eta_{\omega}(B)\leq 2^{3d+1}\cdot Q(d)\cdot\Gamma(d)(3|B|)^{q_{i}(1-4\varepsilon_{i})-\varepsilon_{i}-d\varepsilon_{i+1}^3}|B|^{-\frac{2(C+4)\varepsilon_{i}^3}{c}}.
			\end{equation} 
			
			It follows from the estimations \eqref{control B>L},\eqref{control B<<L} and \eqref{control B<L} that 			$$\underline \dim_H (\eta_{\omega})\geq\lim_{i\to \infty}q_i=q_0,$$ then $\dim_{H}K_{\omega}\geq q_0$, so is $\dim_{H}W(\omega,\phi)\geq q_0$.

		\end{description}

	\end{proof}
	
	\section{Remarks of corollary~\ref{W'} and corollary~\ref{W''}}\label{section:corrollary}
	
	This section is mainly to explain the result in corollary~\ref{W'} and corollary~\ref{W''}. For the upper bound of the Hausdorff dimension, it is the same in section~\ref{section:Upper bound}, where we use a very natural cover to get the control of the upper bound.
	
	Now we begin to explain the lower bound. 
	
	{\bf For corollary~\ref{W'}.} The lower bound is almost the same of section~\ref{section:Lower bound} except the preparation of the choice of $v'(l)$ in step 2 .  
	
	The important procedure of choosing $p$ and $v'(l)$ in step 2 is to search a point $x\in X_{\omega}^{w\ast v(l)}$ so that to make sure the existence of them.
	
	Let us compare $z_{\sigma^n\omega}\in X_{\sigma^n\omega}$ and \eqref{Ass W'-1} in assumption of corollary~\ref{W'}.
	
	In step 2 of the proof of theorem~\ref{theorem lower bound}, for given $w$ and $v(l)$, \eqref{M big} implies for $1\leq s\leq l(\sigma^{m+\mathcal{N}+p_{i+1}+\mathcal{N}}\omega)$, we can joint the words $w\ast v(l)$ and $s$ as $w\ast v(l)\ast s$ and $z_{\sigma^{m+\mathcal{N}+n+\mathcal{N}}\omega}\in X_{\sigma^{m+\mathcal{N}+n+\mathcal{N}}\omega}$ ensures that we can find $s$ with $z_{\sigma^{m+\mathcal{N}+n+\mathcal{N}}\omega}\in X_{\sigma^{m+\mathcal{N}+n+\mathcal{N}}\omega}^s$. So we have 
	$$x=:g_{\omega}^{w\ast v(l)\ast}(z_{\sigma^{m+\mathcal{N}+n+\mathcal{N}}\omega})\in X_{\omega}^{w\ast v(l)\ast s}\subset X_{\omega}^{w\ast v(l)},$$
	and then the existence of $p$ and $v'(l)$.

	For corollary~\ref{W'}, we asked $\widetilde{M}$ large enough such that \eqref{M big} can be changed as 
	$$\mathbb{P}(\{\omega\in\Omega: M(\omega)\leq \widetilde{M},\ M'(\omega)\leq \widetilde{M}\})>7/8.$$
	Then continue the process in section~\ref{section:Basic properties}.  For given $w$ and $v(l)$, assumption \eqref{Ass W'-1} implies that there exists $\widetilde{v}\in \Sigma_{\sigma^{m+\mathcal{N}+n}\omega,k}$ with $k\leq\widetilde{M}\leq \mathcal{N}$ 
	such that $w\ast v(l)\widetilde{v}\in\Sigma_{\omega,m+\mathcal{N}+n+k}$ and $x=:g_{\omega}^{w\ast v(l)\widetilde{v}}(z_{\sigma^{m+\mathcal{N}+n+k}\omega}^{w\ast v(l)\widetilde{v}})\in X_{\omega}^{w\ast v(l)\widetilde{v}}\subset X_{\omega}^{w\ast v(l)}$. This will also implies the existence of $p$ and $v'(l)$. In fact the word $w\ast v(l)\widetilde{v}$ plays the similar role as $w\ast v(l)\ast s$ in the proof of theorem~\ref{theorem lower bound}.

	{\bf For corollary~\ref{W''}.}
	Define $z_{\sigma^{|v|}\omega}^{v}=:T_{\omega}^{v}x_{\omega}^v=x_{\omega}^v\in X_{\sigma^{|v|}\omega}$ and $x_{\omega}^v\in X_{\omega}$, we get $z_\omega^v\in  X_{\sigma^{|v|-1}\omega}$. \eqref{Ass W''-1} in the assumption implies \eqref{Ass W'-1} (if we do not distinguish $M'$ and $M''$), so for given $w$ and $v(l)$, we can also find $\widetilde{v}$ such that $w\ast v(l)\widetilde{v}\in\Sigma_{\omega,m+\mathcal{N}+n+k}$, and $z_{\sigma^{m+\mathcal{N}+n+k}\omega}^{w\ast v(l)\widetilde{v}}\in X_{\omega}^{w\ast v(l)\widetilde{v}}\subset X_{\omega}^{w\ast v(l)}$ with $k\leq \mathcal{N}$. So that we can also choose $p$ and $v'(l)$ as in step 2 in the proof of theorem~\ref{theorem lower bound}.
	
	Noticing $T_{\omega}^{w\ast v(l)\widetilde{v}} z_{\sigma^{m+\mathcal{N}+n+k}\omega}^{w\ast v(l)\widetilde{v}}=z_{\sigma^{m+\mathcal{N}+n+k}\omega}^{w\ast v(l)\widetilde{v}}\in X_{\sigma^{m+\mathcal{N}+n+k}\omega}\cap X_{\omega}^{w\ast v(l)\widetilde{v}} $,
	\begin{align*}
	\|T_{\omega}^{w\ast v(l)\widetilde{v}}x-x\|\leq& \|T_{\omega}^{w\ast v(l)\widetilde{v}}x-T_{\omega}^{w\ast v(l)\widetilde{v}} z_{\sigma^{m+\mathcal{N}+n+k}\omega}^{w\ast v(l)\widetilde{v}}\|\\
	&+\|T_{\omega}^{w\ast v(l)\widetilde{v}} z_{\sigma^{m+\mathcal{N}+n+k}\omega}^{w\ast v(l)\widetilde{v}}-z_{\sigma^{m+\mathcal{N}+n+k}\omega}^{w\ast v(l)\widetilde{v}}\|\\
	&+\|z_{\sigma^{m+\mathcal{N}+n+k}\omega}^{w\ast v(l)\widetilde{v}}-x\|\\
	=& \|T_{\omega}^{w\ast v(l)\widetilde{v}}x-T_{\omega}^{w\ast v(l)\widetilde{v}} z_{\sigma^{m+\mathcal{N}+n+k}\omega}^{w\ast v(l)\widetilde{v}}\|+\|z_{\sigma^{m+\mathcal{N}+n+k}\omega}^{w\ast v(l)\widetilde{v}}-x\|
	\end{align*} 
	Since $T_{\omega}^{w\ast v(l)\widetilde{v}}$ is essentially expanding, the term $\|z_{\sigma^{m+\mathcal{N}+n+k}\omega}^{w\ast v(l)\widetilde{v}}-x\|$ can be ignored with respect the term $\|T_{\omega}^{w\ast v(l)\widetilde{v}}x-T_{\omega}^{w\ast v(l)\widetilde{v}} z_{\sigma^{m+\mathcal{N}+n+k}\omega}^{w\ast v(l)\widetilde{v}}\|$, which means for any $x\in U_{\omega}^{w\ast v'(l)}\subset U_{\omega}^{w\ast v(l)\ast s}$, 
	we can also imply \eqref{Condition Tnx to z}.
	The control of the lower bound of the Hausdorff dimension for $W''(\phi,\omega)$ is almost the same as for $W'(\phi,\omega)$ if we choose proper $\{z_{\omega}^v: \omega\in\Omega,v\in\Sigma_{\omega,*}\}$.
	   
    
	\medskip
	Received xxxx 20xx; revised xxxx 20xx.
	\medskip
	
\end{document}